\newtheorem{theorem}{Theorem}[section]
\newtheorem{lemma}[theorem]{Lemma}
\newtheorem{proposition}[theorem]{Proposition}
\newtheorem{corollary}[theorem]{Corollary}
\newtheoremstyle{definition}% name
  {6pt}%      Space above
  {6pt}%      Space below
  {}%         Body font
  {}%         Indent amount (empty = no indent, \parindent = para indent)
  {\bfseries}% Thm head font
  {.}%        Punctuation after thm head
  {.5em}%     Space after thm head: " " = normal interword space;
\theoremstyle{definition}
\newtheorem{definition}[theorem]{Definition}
\newtheoremstyle{remark}% name
  {6pt}%      Space above
  {6pt}%      Space below
  {}%         Body font
  {}%         Indent amount (empty = no indent, \parindent = para indent)
  {\bfseries}% Thm head font
  {.}%        Punctuation after thm head
  {.5em}%     Space after thm head: " " = normal interword space;
\theoremstyle{remark}
\newtheorem{remark}[theorem]{Remark}
\renewcommand\@makefntext[1]{%
\setlength\parindent{1em}%
\noindent

\makebox[1.8em][r]{}{#1}} \makeatother
\begin{document}
\title{\bf \large  THE DEGREE SEQUENCE  OF IDEALS \\AND MULTIPLICITIES}
\author{
 \centerline{Duong Quoc Viet}\\
\small Department of Mathematics, Hanoi National University of Education\\
\small 136 Xuan Thuy street, Hanoi, Vietnam\\
\small  duongquocviet@fmail.vnn.vn\\
}

   \date{}
\maketitle \centerline{\parbox[c]{11.5 cm}{ \small  ABSTRACT: This
paper investigates the relationship between
 multiplicities and the degree sequence of ideals in graded
algebras, gives  multiplicity equations of graded rings via the
degree sequence of ideals, and characterizes mixed multiplicities
and multiplicities of Rees rings in terms of the degree sequence
of ideals. As an application, the paper answered an open question
on the multiplicity of Rees rings.
 }}

\section{Introduction}
Establishment of formulas for calculating multiplicities is always
an interesting problem and attracted much attention. Let
$S=\bigoplus_{n\ge 0} S_n$ be a finitely generated standard
$\mathbb{N}$-graded algebra over an infinite field  $k.$  Denote
by $\frak m$ the maximal homogeneous ideal of $S.$
 It is well-known that if $c_1 \le
\cdots \le c_t$  are the degrees of the elements of an arbitrary
homogeneous minimal basis of a homogeneous ideal $I$ then the
sequence $(c_1,\ldots, c_t)$ does not depend upon the choice of
the homogeneous minimal basis, and this sequence is called the
{\it degree sequence} of $I.$

\vskip.2cm Let $I$ be an ideal of $S.$  One always want to know
the relationship between
 multiplicities of objects arising from $I$ and the degree
sequence of $I.$ \footnotetext{\begin{itemize} \item[ ] This
research was in part supported by a grant from  NAFOSTED. \item[
]{\it Mathematics Subject  Classification}(2010): Primary 13H15.
Secondary 14C17, 13D40, 13C15. \item[ ]{\it Key words and
phrases}: The degree sequence, multiplicity, mixed multiplicity,
(FC)-sequence.
\end{itemize}}

\vskip.2cm In fact,  Herzog, Trung, Ulrich in 1992 \cite {HTU}
started the investigation of the multiplicity of blow-up rings of
ideals generated by $d$-sequences in terms of the degree sequence.

\vskip.2cmWe would  like now to begin    with  mixed
multiplicities and multiplicities of Rees rings $R(I)$ of
homogeneous ideals $I$ generated by a subsystems of parameters.
This problem  originated from a paper of Trung in 1993
\cite[Theorem 3.3]{Tr} by the following.

\begin{theorem}[$\mathrm{\cite {Tr}}$] \label{th0.2} Let $I$ be a
homogeneous ideal generated by a subsystem of parameters $x_1,
\ldots,x_h$ which is an $I$-filter-regular sequence with $\deg x_1
\le \cdots \le \deg x_h.$ Denote by $(a_1,\ldots,a_h)$ the degree
sequence of $I.$ Then $e(R(I)) = (1+\sum_{i=1}^{h-1} a_1 \cdots
a_i) e(S).$
\end{theorem}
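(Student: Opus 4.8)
The plan is to prove the equivalent additive recursion
\[
e(R(I)) \;=\; e(S) + e\bigl(R_{S'}(I')\bigr), \qquad S'=S/(x_1),\ \ I'=IS'=(x_2,\dots,x_h)S',
\]
by induction on $h$, which unwinds to $e(R(I))=\sum_{i=0}^{h-1}e\bigl(S/(x_1,\dots,x_i)\bigr)$, and then to convert this to the stated product form using the elementary fact that a homogeneous filter-regular parameter $x$ of degree $a$ in a graded algebra $A$ of positive dimension satisfies $e(A/xA)=a\,e(A)$. The latter is clear from the Hilbert series, since the factor $1-z^{a}$ contributes $a$ at $z=1$, and the finite-length defect arising from filter-regularity (rather than honest regularity) does not affect the leading term; iterating gives $e(S/(x_1,\dots,x_i))=a_1\cdots a_i\,e(S)$ and in particular $e(S')=a_1e(S)$, so the recursion reproduces $\bigl(1+\sum_{i=1}^{h-1}a_1\cdots a_i\bigr)e(S)$. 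Here $e(R(I))$ is read as the Hilbert--Samuel multiplicity of $R(I)=\bigoplus_{n\ge0}I^nt^n$ with respect to its maximal homogeneous ideal $\mathfrak N=\mathfrak m R(I)+R(I)_+$. For $h=1$ one checks directly that $R(I)=S[x_1t]\cong S[Y]$ is a polynomial extension, so $e(R(I))=e(S)$: this is the base case.

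For the inductive step I would cut $R(I)$ by the element $x_1t\in R(I)_+\subseteq\mathfrak N$. Using that $x_1$ is $I$-filter-regular, I would first establish $I^m\cap x_1S=x_1I^{m-1}$ for all $m\gg0$, which identifies $I^m/x_1I^{m-1}$ with $(I')^m$ in large degrees and yields an exact sequence of graded $R(I)$-modules, valid up to a module of finite length,
\[
0\longrightarrow S \longrightarrow R(I)/(x_1t)R(I)\longrightarrow R_{S'}(I')\longrightarrow 0,
\]
where the left-hand copy of $S$ is $x_1S$ in Rees-degree $0$; note that $R(I)/(x_1t)R(I)$ has full degree-zero part $S$, whereas $R_{S'}(I')$ has degree-zero part $S'$, and this discrepancy is exactly $x_1S\cong S$.

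All three modules have dimension $d=\dim R(I)-1$, so additivity of multiplicity along the sequence gives $e(\mathfrak N;R(I)/(x_1t)R(I))=e(\mathfrak N;S)+e(\mathfrak N;R_{S'}(I'))$. Since $S=R(I)/R(I)_+$, on which $\mathfrak N$ acts as $\mathfrak m$, one has $e(\mathfrak N;S)=e(S)$; and since $\mathfrak N$ maps onto the maximal homogeneous ideal of $R_{S'}(I')$, one has $e(\mathfrak N;R_{S'}(I'))=e(R_{S'}(I'))$. Finally, because $x_1$ is an $I$-filter-regular parameter, $x_1t$ is (up to finite length) a superficial nonzerodivisor for $\mathfrak N$ and the dimension drops by exactly one, so $e(\mathfrak N;R(I))=e(\mathfrak N;R(I)/(x_1t)R(I))$. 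Combining the displays gives $e(R(I))=e(S)+e(R_{S'}(I'))$, and the induction hypothesis applied to $(S',I')$ with degree sequence $(a_2,\dots,a_h)$ closes the argument.

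The hard part will be the last geometric input: showing that the $I$-filter-regularity of $x_1$ (a condition on $S$) lifts to $R(I)$, namely that $x_1t$ is simultaneously a superficial nonzerodivisor-up-to-finite-length for $\mathfrak N$ — so that passing to $R(I)/(x_1t)R(I)$ preserves the multiplicity and lowers the dimension by one — and compatible with the $I$-adic filtration so that $I^m\cap x_1S=x_1I^{m-1}$ eventually, making the quotient $R_{S'}(I')$ modulo finite length. Controlling these finite-length discrepancies, equivalently checking that the relevant $H^0_{\mathfrak m}$-type defects are bounded and hence invisible to the top multiplicity, is precisely where the filter-regular hypothesis, the ordering $\deg x_1\le\cdots\le\deg x_h$, and the infiniteness of $k$ enter, and it is the crux of the proof.
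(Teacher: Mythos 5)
First, a point of orientation: the paper does not prove this statement at all. It is quoted from \cite{Tr} (Trung 1993), and the paper's own contribution is the stronger Corollary \ref{th3.6} (which drops the ordering hypothesis), proved by a route entirely different from yours: Verma's formula expressing $e(R(I))$ as the sum $\sum_{i=0}^{h-1}e(\mathfrak m^{[d-i]},I^{[i]};S)$, the identification $e(\mathfrak m^{[d-i]},I^{[i]};S)=e(S/(x_1,\dots,x_i))$ for weak-(FC)-sequences (Remark \ref{rm36a}), and the degree-sequence multiplicity theorems (Theorems \ref{th1.1a} and \ref{th1.1v}) together with Lemma \ref{lm1.6}. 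Your induction, cutting the Rees algebra by $x_1t$, arrives at the same additive decomposition $e(R(I))=\sum_{i=0}^{h-1}e(S/(x_1,\dots,x_i))$, but by a different mechanism; so the question is whether your mechanism is complete. It is not.

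The gap is exactly the point you flag as ``the crux,'' and it cannot be waved through: your argument needs (a) $I^m\cap x_1S=x_1I^{m-1}$ for $m\gg0$, and (b) $e(\mathfrak N;R(I))=e\bigl(\mathfrak N;R(I)/(x_1t)R(I)\bigr)$, i.e.\ that $x_1t$ is superficial for $\mathfrak N$. Neither follows from $I$-filter-regularity, which is the condition $0:x_1\subseteq 0:I^\infty$ and constrains only the associated primes of $S$; (a) and (b) are Artin--Rees/superficiality conditions on the $I$-adic filtration, i.e.\ conditions on the Rees algebra. In the paper's own terminology, (a) is precisely condition (FC1) of Definition \ref{de1.2}, which is listed \emph{separately} from filter-regularity (FC2) and is obtained only for generically chosen elements via Rees' lemma (cf.\ Lemma \ref{lm1.4}); Remark \ref{rm36b} stresses that homogeneous sequences enjoying both properties need not exist in general, which is exactly why the hypothesis $\deg x_1\le\cdots\le\deg x_h$ matters and why Question 1 was open. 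Establishing (a) and (b) for the \emph{given} homogeneous generators, using the principal-class and ordering hypotheses, is the actual content of Trung's theorem, so deferring it leaves a reduction, not a proof. Two secondary inaccuracies are repairable but should be fixed: the defect modules are not of finite length --- the kernel of $R(I)/(x_1t)\to R_{S'}(I')$ in degree $n\ge2$ is $(I^n\cap x_1S)/x_1I^{n-1}$, which is killed by $I^{n-1}$ and so can have dimension up to $d-h>0$; what additivity really needs is dimension $<\dim R(I)/(x_1t)$, which does follow from (a) since the defect is then concentrated in finitely many degrees and supported on $V(I)$. Likewise $0:x_1$ is not finite length but only of dimension $\le\dim S/I=d-h$, so your Hilbert-series step $e(S/(x_1,\dots,x_i))=a_1\cdots a_ie(S)$ is valid precisely because you only need it for $i\le h-1$; for $i=h$ it can fail, consistent with the fact that the theorem itself would be false with $h$ terms in the sum.
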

By using weak-(FC)-sequences in \cite{Vi}(see e.g. \cite{DMT, DV,
VT}),  Viet in 2003 \cite{ViS} obtained formulas for mixed
multiplicities and multiplicities of Rees rings of arbitrary
homogeneous equimultiple ideals in $S,$ i.e, the additional
condition on the generation of $I$ in Theorem \ref {th0.2} has
been  dropped.  We emphasize
 that the proof of \cite{ViS} is based on the existence of
homogeneous weak-(FC)-elements in $I$ as in  Lemma \ref{lm1.6}.
However, this is not correct (see Remark \ref {rm36b}). So the
results in \cite{ViS} are  not yet proven. And  \cite{TV2} in 2010
gave
 the following open question which has also been stated in \cite{Tr} in 1993.
 \vskip.2cm \noindent{\bf Question 1} \cite[Question
7.7] {TV2}{\bf.} {\it Can one drop the condition $\deg x_1 \le
\cdots \le \deg x_h$ in Theorem \ref{th0.2}}?

  \vskip.2cmIt has long been known that the approach, which based
on the existence of sequences of homogeneous elements, encountered
obstructions in the progress of calculating multiplicities.
 This is one of motivations
to help us giving another approach for this problem. Our approach
is started by the construction of the following object.

  \vskip.2cmNote that for any non-zero element $ x$ of $S,$ one can
always write uniquely in the form  $x = x_{m_1} + \cdots+x_{m_t},$
here $m_1 < \cdots < m_t$ and $ 0 \ne x_{m_i} \in S_{m_i}$ for all
$1 \le i \le t,$ then  we put $\mathrm{in}x = x_{m_1}$ and $o(x) =
m_1.$   Let $I$ be an ideal of $S.$ Denote by $\mathrm{in}I$ the
ideal generated by  $\{\mathrm{in}x \mid x\in I \setminus \frak mI
\}.$ Then the degree sequence of $\mathrm{in}I$ is called the {\it
degree sequence} of $I.$ Proposition \ref{lm1.1v} showed the
existence of  minimal bases $x_1, \ldots, x_n$ of $I$ such that
  $(o(x_1),\ldots,o(x_n))$ is the degree sequence of $I.$

\vskip.2cm This paper examines the relationship between
multiplicities of ideals in graded algebras and the degree
sequence of ideals, hereby answers Question 1.
 And as one might expect,  we first obtain the following
result for the Hilbert-Samuel multiplicity of graded domains, in
terms of the degree sequence of ideals. \vskip 0.2cm \noindent
  {\bf Theorem 1.2} (Theorem \ref{th1.1a}){\bf.} {\it Let $S=\bigoplus_{n\ge 0} S_n$
  be  a finitely generated standard
$\mathbb{N}$-graded algebra over an infinite field  $k$ of $\dim S
= d >0.$ Let $I$ be an $\frak m$-primary ideal of $S$. Assume that
$S$ is a domain and  $(c_1,\ldots,c_{d})$ is the degree sequence
of a
  minimal reduction of $I.$
 Then we have $e(I; S) = c_1\cdots c_d e(S).$}

 \vskip.2cmIt should be noted that this theorem does not hold in general if
one omits the condition that $S$  is a domain (see Remark
\ref{rm35}).

 \vskip.2cmHowever,  using this theorem as a tool  we prove the following
theorem.

 \vskip 0.2cm \noindent{\bf Theorem 1.3} (Theorem \ref
{th1.1v}){\bf.} {\it Let $S=\bigoplus_{n\ge 0} S_n$ be  a finitely
generated standard $\mathbb{N}$-graded algebra over an infinite
field  $k$ of $\dim S = d >0.$ Let $x_1, \ldots, x_d$ be a system
of parameters of $S.$ Assume that $\mathrm{in}x_1, \ldots,
\mathrm{in}x_d$ is a system of parameters of $S.$ Then we have}
$e(x_1,\ldots, x_d; S)= o(x_1)\cdots o(x_d) e(S).$ \vskip.2cm
Theorem 1.3 seems to become an effective support for
characterizing multiplicities in terms of the degree sequence of
ideals. In fact, Theorem 1.3 yielded  the interesting applications
for mixed multiplicities (see Theorem \ref{th1.3} and Corollary
\ref{th3.6a}).

 \vskip.2cmFurther, by using Theorem 1.3 and Lemma \ref{lm1.6} we immediately
answered to the Question 1 by the following result.

\vskip 0.2cm \noindent
  {\bf Corollary 1.4} (Corollary  \ref {th3.6}){\bf.}
  {\it Let $I$ be a homogeneous  equimultiple
ideal in a   standard graded algebra $S$  of $\mathrm{ht}I =h >
0.$ Let $(a_1, \ldots,a_h)$ be the degree sequence of a
homogeneous minimal reduction of $I.$ Set $\dim S =d.$  Denote by
$e({\frak m}^{[d-i]},I^{[i]} ; S)$ the mixed multiplicity of
$(I,\frak m)$
 of the type $(i, d-i)$ for $ i \le d-1.$ Then we have:
\begin{enumerate}[\rm (i)]
 \item $ e({\frak m}^{[d-i]},I^{[i]} ; S) = a_1 \cdots a_ie(S)$  for all $ i \le h-1.$
 \item $e(R(I)) =
(1+\sum_{i=1}^{h-1}a_1 \cdots a_i)e(S).$
\end{enumerate} }
\enlargethispage{1.0 cm}

Corollary 1.4 is also Theorem 3.1 given by Viet in \cite{ViS}. So
by another approach, the present paper showed that \cite [Theorem
3.1]{ViS} and hence the other results on mixed multiplicities and
multiplicities of Rees rings in \cite{ViS} are true, and the
Question 1 has a positive answer (see Remark \ref{rm36b}).

 \vskip.2cmNotice that Corollary 1.4 is a particular case of Corollary
\ref{th3.6a}. And although in general there does not exist a
homogeneous weak-(FC)-sequence in $I$ as in  Lemma \ref{lm1.6},
this lemma  showed the existence of weak-(FC)-sequences in $I$
having  the same degree sequence as a homogeneous minimal
reduction of $I.$ This is  an important property used for the
proof of Corollary 1.4.
 The results of the paper showed the
important role of the degree sequence in studying  multiplicities
 of ideals.

 \vskip.2cmThis paper is divided into  three  sections. Section 2 is devoted
to the discussion of the degree sequence of ideals (Proposition
\ref{lm1.01}, Proposition \ref{lm1.1v}) and multiplicities of
graded algebras (Proposition \ref{lm1.1}, Corollary \ref{th1.1h},
Corollary \ref{th1.1q}, Theorem \ref{th1.1a}, Theorem \ref
{th1.1v}). And as applications, we get results for mixed
multiplicities and multiplicities of Rees rings of ideals in
graded rings (Theorem \ref{th1.3}, Corollary \ref{th3.6a}).
Section 3 investigates the relationship between the degree
sequence of homogeneous equimultiple ideals and the degree
 sequence of weak-(FC)-sequences of these ideals (Lemma
\ref{lm1.6}) that will be used in the proofs for results on
 mixed multiplicities and multiplicities of Rees rings
of homogeneous  equimultiple  ideals (Corollary \ref{th3.6}, and
Corollary \ref{co37c}). And Question 1 is answered by Corollary
\ref{th3.6}.

\section{ Multiplicities of  Graded  Rings}

In this section,  we first define some objects in graded rings,
and study the relationship between
   multiplicities; mixed multiplicities of graded algebras and
   the degree sequence of ideals,
 give  multiplicity formulas of
  graded rings; mixed multiplicity formulas and the multiplicity of Rees rings
 via  the degree sequence of
ideals.

\vskip.2cmOur approach is started by the construction of the
following useful objects  in standard $\mathbb{N}$-graded
algebras.

\vskip.2cm
 Recall that
$S=\bigoplus_{n\ge 0} S_n$ is  the finitely generated standard
$\mathbb{N}$-graded algebra over an infinite field $k,$ i.e., $S$
is generated over $k$ by elements of total degree $1,$ and $\frak
m$ the maximal homogeneous ideal of $S.$ Then  for any element  $
x \ne 0$ of $S,$ one can always write uniquely in the form  $x =
x_{m_1} + \cdots+x_{m_t},$ here $m_1 < \cdots < m_t$ and $ 0 \ne
x_{m_i} \in S_{m_i}$ for all $1 \le i \le t.$ In this case, we put
 $o(x)= {m_1} = \deg x_{m_1}$ and $\mathrm{in}x = x_{m_1}.$
 Assign
$\mathrm{in}0 = 0;$ $o(0) = + \infty.$  Set $o(U) = \min \{o(x)
\mid x \in U\}$ for any $U  \subset S.$

\vskip.2cm Let  $b_1, \ldots, b_n \in S_1$ be a minimal basis of
$\frak m$ and fix this order.
  Set $\mathbf{m} = (m_1,\ldots, m_n); $ $|\mathbf{m}| = m_1+\cdots+ m_n$ and
$\mathbf{b}^\mathbf{m} = b_1^{m_1}\cdots b_n^{m_n}$ for any
$(m_1,\ldots, m_n) \in \mathbb{N}^n.$  It is obvious that $ S =
\sum_{\mathbf{m} \in \mathbb{N}^n}k\mathbf{b}^\mathbf{m}$. And it
is a plain fact that $C_u =\{\mathbf{b}^\mathbf{m} \mid |
\mathbf{m}| = u \}$ generates $\frak m^u$  for any $u > 0.$ Then
for any $u > 0,$ one can choose a minimal basis $B_u \subset C_u $
of $\frak m^u,$ and fix this basis. Set $B_0 = \{1\}.$
   Then $$S =
\bigoplus_{u \ge 0}(\bigoplus_{\mathbf{b}^\mathbf{m} \in
B_u}k\mathbf{b}^\mathbf{m}) \;\mathrm{and}\; \frak m =
\bigoplus_{u
> 0 }(\bigoplus_{\mathbf{b}^\mathbf{m} \in
B_u}k\mathbf{b}^\mathbf{m}).$$

 \vskip.2cmNow we define an order on $\mathbb{N}^{n}$ as follows: $\mathbf{m}
<\mathbf{m}'$ if the first non-zero component from the left side
of $(|\mathbf{m}| - |\mathbf{m}'|, m_1 - m'_1, \ldots, m_n -
m'_n)$ is negative.

 \vskip.2cm Note that each
non-zero element  $x$ of $S$ can always  be written uniquely in
the form $x = \sum_{\mathbf{b}^\mathbf{m} \in \bigcup_{u\ge0}B_u}
x_{\mathbf{m}}\mathbf{b}^\mathbf{m},$ where $x_\mathbf{m} \in k$
and there are only finitely many $\mathbf{m}$ such that $
x_{\mathbf{m}} \ne 0,$ in this case, set $x^* =
\mathbf{b}^\mathbf{v}$ if $\mathbf{v} = \min\{\mathbf{m}\mid
x_\mathbf{m} \not=0\}.$  Assign $0^* = 0.$   Denote by $I^*$ the
ideal generated by $\{x^* \mid x\in I \setminus \frak mI \}$ for
any ideal $I$ of  $S.$

 \vskip.2cmFor   any  homogeneous ideal $I$ of $S,$ it is well-known that if
$c_1 \le \cdots \le c_t$  are  the degrees of the elements of an
arbitrary homogeneous minimal basis of $I,$ then the sequence
$(c_1,\ldots, c_t)$ does not depend on the choice of the minimal
basis, and this sequence is called the {\it degree sequence} of
$I.$ The case of an arbitrary ideal $I$ of $S,$ denote by
$\mathrm{in}I$ the ideal generated by  $\{\mathrm{in}x \mid x\in I
\setminus \frak mI \}.$ Then the degree sequence of $\mathrm{in}I$
is called the {\it degree sequence} of $I.$ Denote by $\mu(I)$ the
minimal number of generators of $I.$
\begin{remark} \label{rm1.2s} If $I$ is a homogeneous ideal, then $I$ has
homogeneous minimal bases. So it is easy to check that
$\mathrm{in}I = I.$ Hence the above notion of the degree sequence
is an extension of the ordinary notion of the degree sequence for
homogeneous ideals.
\end{remark}

\vskip.2cm Further, the relationship between $I;$  $\mathrm{in}I$
and $I^*$ is shown by the following lemma.

\begin{proposition}\label{lm1.01} Let $I$ be an ideal of $S.$
We have the following statements.
\begin{enumerate}[\rm (i)]
 \item $\mu(I^*) = \mu(I) = \mu(\mathrm{in}I).$
\item There exists a minimal basis $x_1, \ldots, x_n$ of $I$ such
that $\mathrm{in}x_1,  \ldots, \mathrm{in}x_n$  and $x^*_1,
\ldots, x^*_n$ are minimal bases of $\mathrm{in}I$ and $I^*,$
respectively.

\item The degree sequences of $I$ and $I^*$ are the same.
  \end{enumerate}
\end{proposition}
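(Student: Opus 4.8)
The plan is to establish (ii) first and then read off (i) and (iii) as formal consequences. I begin with two elementary observations. For $0\neq x\in S$, the degree-first nature of the order $<$ forces $x^{*}$ to be the $<$-least basis monomial occurring in the lowest-degree component of $x$; hence $\deg x^{*}=o(x)=\deg(\mathrm{in}x)$, and in fact $x^{*}=(\mathrm{in}x)^{*}$. Consequently both $\mathrm{in}I$ and $I^{*}$ are homogeneous ideals, and for any homogeneous ideal $J$ one has $\mu(J)=\dim_{k}(J/\mathfrak{m}J)$, with the number of minimal generators of a given degree $d$ equal to $\dim_{k}(J/\mathfrak{m}J)_{d}$. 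Granting a minimal basis $x_{1},\dots,x_{n}$ of $I$ as in (ii), statement (i) is immediate, since all three ideals then possess exactly $n$ minimal generators; and (iii) follows because both degree sequences are the sorted tuple $(o(x_{1}),\dots,o(x_{n}))$, using $\deg(\mathrm{in}x_{i})=\deg(x_{i}^{*})=o(x_{i})$. Thus everything reduces to producing one minimal basis of $I$ that is simultaneously well adapted to the two leading-term operations.

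To construct such a basis I would start from an arbitrary minimal basis of $I$ and refine it using the well-order $<$. Whenever some basis element can be replaced, by adding $S$-combinations of the remaining elements, by one with a strictly $<$-smaller leading term, I perform that replacement; since $<$ well-orders the basis monomials, there is no infinite strictly decreasing sequence of leading terms, so the refinement terminates after finitely many steps and yields a \emph{reduced} minimal basis $x_{1},\dots,x_{n}$. The aim is then to prove that for this reduced basis the leading forms $\mathrm{in}x_{1},\dots,\mathrm{in}x_{n}$ already generate $\mathrm{in}I$, and in parallel that the monomials $x_{1}^{*},\dots,x_{n}^{*}$ generate $I^{*}$; that is, that the reduced minimal basis is automatically a standard basis for both leading ideals.

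For the generation claims I would argue by induction along the well-order. Supposing, say, that some $\mathrm{in}y$ with $y\in I$ failed to lie in $(\mathrm{in}x_{1},\dots,\mathrm{in}x_{n})$, I choose $y$ with $<$-least leading term among such elements, write $y=\sum_{i}f_{i}x_{i}$, and extract the lowest-degree component. If no cancellation occurs among the degree-$o(y)$ parts, then $\mathrm{in}y=\sum_{i}\mathrm{in}(f_{i})\,\mathrm{in}(x_{i})$ lies in the leading ideal, a contradiction; the only remaining possibility is that the leading forms cancel, i.e.\ $\sum_{i}\mathrm{in}(f_{i})\,\mathrm{in}(x_{i})=0$, and this syzygy among the $\mathrm{in}x_{i}$ is used, by a Buchberger-type rewriting, either to exhibit $\mathrm{in}y$ in the leading ideal after all or to force an admissible reduction of the basis, contradicting reducedness. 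The identical argument carried out with the monomials $x_{i}^{*}$ yields that $x_{1}^{*},\dots,x_{n}^{*}$ generate $I^{*}$. Minimality of all three bases then rests on a single independence statement: if $\sum_{i}c_{i}x_{i}\in\mathfrak{m}I$ with $c_{i}\in k$ not all zero, then comparing lowest-degree forms places a nontrivial $k$-combination of the $\mathrm{in}x_{i}$ inside $\mathfrak{m}\cdot\mathrm{in}I$, contradicting that the $\mathrm{in}x_{i}$ minimally generate $\mathrm{in}I$. Hence the images $\bar{x}_{i}$ are independent in $I/\mathfrak{m}I$, so $\mu(I)\geq n$, while generation gives $\mu(I)\leq n$; together with the homogeneous count this yields $\mu(I)=\mu(\mathrm{in}I)=\mu(I^{*})=n$ and the coincidence of degree sequences.

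The main obstacle is exactly the leading-form calculus in the \emph{quotient} ring $S$. Unlike in a polynomial ring, a product $\mathbf{b}^{\mathbf{w}}\mathbf{b}^{\mathbf{v}}$ must be re-expanded in the fixed monomial basis $\bigcup_{u}B_{u}$ and may collapse or combine with other monomials, so the multiplicativity $(fg)^{*}=f^{*}g^{*}$ can fail and leading forms can cancel in ways that have no polynomial-ring analogue. Controlling these cancellations — showing that for a reduced minimal basis no such cancellation obstructs membership in either leading ideal — is the technical heart of the argument, and it is precisely this control that forces the three generator counts, and the two degree sequences, to agree rather than merely to satisfy inequalities. The two features that make it possible are the degree-first design of the order, so that multiplication by $\mathfrak{m}$ strictly raises degree and the refinement terminates, and the minimality of each chosen basis $B_{u}$.
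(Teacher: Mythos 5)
Your overall architecture (prove (ii), then read off (i) and (iii) using $\deg\mathrm{in}x_i=\deg x_i^{*}=o(x_i)$) matches the paper, but the core of your argument has a fatal gap: the generation claim you set out to prove is false. You aim to show that for a ``reduced'' minimal basis $x_1,\dots,x_n$ one has $\mathrm{in}y\in(\mathrm{in}x_1,\dots,\mathrm{in}x_n)$ for \emph{every} $y\in I$, i.e.\ that a reduced minimal basis is a standard basis for the ideal of all initial forms. The paper's own Remark \ref{rm1.2c} refutes this in exactly the generality required here: in $S=k[X,Y]/(XY,X^2)$ with $I=(\bar X+\bar Y^2)$, the unique (hence trivially reduced) minimal basis is $x_1=\bar X+\bar Y^2$, yet $x_1^2=\bar Y^4\in I$ has $\mathrm{in}(x_1^2)=\bar Y^4\notin(\mathrm{in}x_1)=(\bar X)$. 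So your minimal counterexample $y$ exists and neither horn of your dichotomy (``rewrite $\mathrm{in}y$ into the leading ideal, or reduce the basis'') can hold; no Buchberger-type rewriting can repair an inclusion that fails. This rewriting step is precisely the part you leave unexecuted (you call it ``the technical heart''), and in an arbitrary quotient $S$ with collapsing products and $(fg)^{*}\neq f^{*}g^{*}$ there is no standard-basis calculus to appeal to. A secondary but independent problem is that your reduction runs in the wrong direction: making leading terms $<$-smaller by adding $S$-multiples of the other generators \emph{creates} cancellable leading terms instead of removing them. Already in $k[a,b]$ with $I=(a,b^3)$, your procedure replaces the good basis $\{a,\,b^3\}$ by $\{a,\,ab+b^3\}$ (since $ab<b^3$), and the initial forms of the latter generate only $(a)$, which misses $\mathrm{in}(b^3)=b^3\in\mathrm{in}I$; so the basis your algorithm outputs need not satisfy (ii) even in a polynomial ring.

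What makes the proposition true --- and what your proposal never uses --- is that $\mathrm{in}I$ and $I^{*}$ are by definition generated only by $\mathrm{in}x$ and $x^{*}$ for $x\in I\setminus\frak mI$, not for all $x\in I$. Modulo $\frak mI$, every element of $I$ is a $k$-linear combination of a minimal basis, so no $S$-coefficient syzygies ever enter the picture. Accordingly, the paper's proof is pure linear algebra over $k$: starting from any minimal basis, whenever $x_i^{*}=x_{i+1}^{*}$ it replaces $x_{i+1}$ by $x_{i+1}-ba^{-1}x_i$ (scalar elimination only) to arrange that $x_1^{*},\dots,x_n^{*}$ are pairwise distinct monomials; then for $x\in I\setminus\frak mI$ it writes $x=\sum_i a_ix_i+y$ with $a_i\in k$ and $y\in\frak mI$, and compares leading monomials to conclude $x^{*}\in\{x_1^{*},\dots,x_n^{*}\}$ and $\mathrm{in}x\in\bigcup_i k^{*}\mathrm{in}x_i$, whence these finitely many elements generate $I^{*}$ and $\mathrm{in}I$ and are minimal generating sets. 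If you want to salvage your write-up, you must (a) restrict the generation claim to $y\in I\setminus\frak mI$, (b) replace the $S$-coefficient representation $y=\sum f_ix_i$ by the congruence $y\equiv\sum a_ix_i \pmod{\frak mI}$ with scalars $a_i$, and (c) replace your leading-term-minimizing reduction by the scalar elimination that separates the monomials $x_i^{*}$ --- at which point you have reconstructed the paper's argument rather than a Gr\"obner-basis one.
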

\begin{proof} Now assume that $\mu(I) =n.$  Then there exist
 $x_1, \ldots, x_n \in I \setminus \frak mI $ such
that $x_1, \ldots, x_n$ is a minimal basis of $I.$ And
  without loss of generality, we can assume
that $\mathbf{v}_1 \le \cdots \le \mathbf{v}_d,$ here   $x_i^* =
\mathbf{b}^{\mathbf{v}_i}$ for all $1 \le i \le d.$ Notice that if
$x^*_i = x^*_{i+1}$ and $$x_i = ax^*_i+\cdots; \;\;x_{i+1} =
bx^*_{i+1}+\cdots$$ for $0 \ne a, b \in k,$ then one can replace
$x_{i+1}$ by $y_{i+1}= x_{i+1}- ba^{-1}x_i$ for any $1 \le i < n.$
Then we get a minimal basis   $y_1, \ldots,y_{n}$ of $I$ such that
$y^*_1, \ldots,y^*_{n}$ are pairwise distinct. Consequently, we
can assume that
 $x_1, \ldots, x_n$ is a minimal basis of $I$
 with $x^*_i \ne x^*_j $ for all $1 \le i \ne j \le n.$
 Set $X = \{x^*_1,
\ldots, x^*_n\}.$   Let  $x\in I \setminus \frak mI.$ Since $x_1,
\ldots, x_n$ is a minimal basis of $I,$ it follows that $x=\sum_{i
=1}^na_ix_i + y,$ here  $a_1, \ldots, a_n \in k$  and $y\in \frak
mI.$ Since $y\in \frak mI,$ it follows that $y^* \notin X \cup
\{x^*\}.$ Because $x^*_1, \ldots,x^*_{n}$ are pairwise distinct,
we get $x^* = x^*_i$ for some $1 \le i \le n.$ Hence $x^*  \in X.$
So $X=\{x^* \mid x\in I \setminus \frak mI \}.$ Therefore  $I^*$
is generated by $X.$ Now since $x^*_1, \ldots,x^*_{n}$ are
pairwise distinct, it follows that $x^*_1, \ldots,x^*_{n}$ are
$k$-linearly independent.
 Therefore $\mu(I^*) = n,$ and $x^*_1,
\ldots,x^*_{n}$ is a minimal basis of $I^*.$ Next, set
$\mathrm{in}x_i = z_i$ for all $1 \le i \le n.$ Then we have
$x^*_i = z^*_i$ for all $1 \le i \le n.$ So $az_i \ne bz_j$ for
all $ 0 \ne a, b \in k$ and $1 \le i \ne j \le n.$

Set $Y = \{
 x \ne 0 \mid  x = a z_j; \; a \in k; \; 1 \le j \le n \}.$
 Then for any  $x\in I \setminus \frak mI,$ since $x_1,
\ldots, x_n$ is a minimal basis of $I,$ we get $x=\sum_{i
=1}^na_ix_i + y,$ here  $a_1, \ldots, a_n \in k$  and $y\in \frak
mI.$ Since $y\in \frak mI,$ it follows that $\mathrm{in}y \notin Y
\cup \{a\mathrm{in}x\}$ for all $0 \ne a \in k.$ Then since $az_i
\ne bz_j$ for all $ 0 \ne a, b \in k$ and $1 \le i \ne j \le n,$
we obtain $\mathrm{in}x = az_i$ for some $1 \le i \le n$ and $ a
\in k.$ So $\mathrm{in}x \in Y.$ We get  $Y=\{\mathrm{in}x \mid
x\in I \setminus \frak mI \}.$ Hence $\mathrm{in}I = (z_1, \ldots,
z_n).$
 Recall that $az_i
\ne bz_j$ for all $ 0 \ne a, b \in k$ and $1 \le i \ne j \le n,$
it follows that
  $z_1, \ldots, z_n$ are $k$-linearly
independent.
 Thus $\mu(\mathrm{in}I) = n$ and
$\mathrm{in}x_1, \ldots, \mathrm{in}x_n$ is a minimal basis of
$\mathrm{in}I.$ We get (i) and (ii). By  (ii) we immediately
obtain (iii).
 \end{proof}
\begin{remark} \label{rm1.2u} From the proof of Proposition
\ref{lm1.01}, it follows that  $X=\{x^* \mid x\in I \setminus
\frak mI \}$ is a minimal basis of  $I^*$  and $\mu (I) = |X|$ for
any ideal $I$ of $S.$ Moreover, if $x_1, \ldots, x_n$ is a minimal
basis of $I$ such that  $x^*_1, \ldots,x^*_{n}$ are pairwise
distinct, then $\mathrm{in}x_1, \ldots, \mathrm{in}x_n$  and
$x^*_1, \ldots, x^*_n$ are minimal bases of $\mathrm{in}I$ and
$I^*,$ respectively. Since $o(x_i) = \deg \mathrm{in}x_i$ for all
$1 \le i \le n,$ hence the degree of $I$ is a permutation of
$o(x_1),\ldots, o(x_n).$ In fact, as the proof of Proposition
\ref{lm1.01}, one also follow that if $x_1, \ldots, x_n$ is a
minimal basis of $I$ such that $a\mathrm{in}x_i \ne
b\mathrm{in}x_j$ for all $ 0 \ne a, b \in k$ and $1 \le i \ne j
\le n,$ then $\mathrm{in}x_1, \ldots, \mathrm{in}x_n$ is a minimal
basis of $\mathrm{in}I.$ And in this case, the degree sequence  of
$I$ is a permutation of $o(x_1),\ldots, o(x_n).$
\end{remark}

Remark \ref {rm1.2u} yields the following conclusion.

\begin{proposition} \label{lm1.1v} Let $x_1, \ldots, x_n$ be a minimal
basis of $I$ with $o(x_1) \le \cdots \le o(x_n)$ and $x^*_1,
\ldots,x^*_{n}$ pairwise distinct.
  Set $o(x_i) = c_i$ for all $1 \le i \le n.$ Then
the sequence $(c_1,\ldots, c_n)$ does not depend on the choice of
this minimal basis, and this sequence is  the  degree sequence of
$I.$

\end{proposition}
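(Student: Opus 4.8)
The plan is to reduce everything to Remark~\ref{rm1.2u}, which already expresses the degree sequence of $I$ in terms of the numbers $o(x_1), \ldots, o(x_n)$ under a hypothesis on the initial forms $\mathrm{in}x_i$; the remaining work is to show that the pairwise-distinctness of $x^*_1, \ldots, x^*_n$ supplies exactly that hypothesis, and then to pin down the ordering.

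First I would record the elementary fact that $x^*_i = (\mathrm{in}x_i)^*$ for every $i$. Indeed, in the monomial order defined above, monomials are compared first by total degree, so the minimum among the monomials occurring in $x_i$ is attained among those of least total degree $o(x_i)$, namely among the terms of $\mathrm{in}x_i$; hence $x^*_i$ is the leading monomial of the homogeneous element $\mathrm{in}x_i$. From this I would deduce that the hypothesis ``$x^*_1, \ldots, x^*_n$ pairwise distinct'' forces $a\,\mathrm{in}x_i \ne b\,\mathrm{in}x_j$ for all $0 \ne a, b \in k$ and $1 \le i \ne j \le n$: if $a\,\mathrm{in}x_i = b\,\mathrm{in}x_j$ with $a, b \ne 0$, then passing to leading monomials gives $x^*_i = (\mathrm{in}x_i)^* = (\mathrm{in}x_j)^* = x^*_j$, a contradiction.

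With this condition verified, Remark~\ref{rm1.2u} applies directly and yields two things: that $\mathrm{in}x_1, \ldots, \mathrm{in}x_n$ is a minimal basis of $\mathrm{in}I$, and that the degree sequence of $I$ is a permutation of $o(x_1), \ldots, o(x_n)$, that is, of $(c_1, \ldots, c_n)$. Since the basis has been chosen so that $c_1 \le \cdots \le c_n$, and since the degree sequence is by convention listed in non-decreasing order, the only permutation compatible with both orderings is the identity; therefore the degree sequence of $I$ equals $(c_1, \ldots, c_n)$.

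Finally, the degree sequence of $I$ is an invariant of $I$ alone, being defined through the homogeneous ideal $\mathrm{in}I$ independently of any chosen basis. Hence the fact that $(c_1, \ldots, c_n)$ coincides with the degree sequence of $I$ simultaneously gives that $(c_1, \ldots, c_n)$ does not depend on the choice of the minimal basis satisfying the stated conditions and that it is precisely the degree sequence of $I$, which are the two assertions. I do not expect a genuine obstacle here; the only point requiring care is the identification $x^*_i = (\mathrm{in}x_i)^*$ and the resulting translation of ``pairwise distinct $x^*_i$'' into the linear-independence form needed to invoke Remark~\ref{rm1.2u}.
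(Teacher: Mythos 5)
Your proof is correct and follows essentially the same route as the paper: the paper deduces Proposition~\ref{lm1.1v} directly from Remark~\ref{rm1.2u} (whose first part already treats the case of pairwise distinct $x^*_1,\ldots,x^*_n$), then sorts and uses that the degree sequence is intrinsic to $\mathrm{in}I$. Your extra step identifying $x^*_i = (\mathrm{in}x_i)^*$ to pass to the condition $a\,\mathrm{in}x_i \ne b\,\mathrm{in}x_j$ is a sound but unnecessary detour, since the remark applies to your hypothesis as stated.
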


 \vskip.2cmThe following result is
an important tool  of this paper.

\begin{proposition} \label{lm1.1} Let $I$ be an ideal of $S.$
Assume that $S$ is a domain.
 Then we have
$$e(S/I)=  e(S/\mathrm{in}I).$$
\end{proposition}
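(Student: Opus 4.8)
The plan is to reduce the statement to the associated graded ring for the $\mathfrak{m}$-adic filtration, using that on a standard graded algebra this filtration coincides with the grading itself. First I would record that, since $S$ is standard graded, $\mathfrak{m}^{j}=\bigoplus_{n\ge j}S_{n}$ for every $j$, so the $\mathfrak{m}$-adic filtration is the degree filtration and $\mathrm{gr}_{\mathfrak{m}}(S)=S$; in particular $o(x)$ is exactly the filtration order of $x$ and $\mathrm{in}x$ is its leading form in $\mathrm{gr}_{\mathfrak{m}}(S)=S$. Since the Hilbert--Samuel function of $S/I$ with respect to $\mathfrak{m}$ equals the Hilbert function of $\mathrm{gr}_{\mathfrak{m}}(S/I)$, one has $e(S/I)=e(\mathrm{gr}_{\mathfrak{m}}(S/I))$, and it therefore suffices to identify the graded ring $\mathrm{gr}_{\mathfrak{m}}(S/I)$ with $S/\mathrm{in}I$.

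The heart of the argument is thus the isomorphism $\mathrm{gr}_{\mathfrak{m}}(S/I)\cong S/\mathrm{in}I$, i.e. the claim that the leading forms of all nonzero elements of $I$ generate the same ideal as the leading forms of the elements of $I\setminus\mathfrak{m}I$, namely $\mathrm{in}I$. I would invoke Proposition \ref{lm1.01} to fix a minimal basis $x_{1},\dots,x_{n}$ of $I$ for which $\mathrm{in}x_{1},\dots,\mathrm{in}x_{n}$ is a minimal basis of $\mathrm{in}I$, and then show that this basis is a \emph{standard} basis, i.e. that $\mathrm{in}x\in(\mathrm{in}x_{1},\dots,\mathrm{in}x_{n})$ for every nonzero $x\in I$. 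This is exactly where the hypothesis that $S$ is a domain is indispensable: in a domain $o(ab)=o(a)+o(b)$, hence $\mathrm{in}(ab)=\mathrm{in}(a)\,\mathrm{in}(b)$, so $o$ is a valuation. Writing $x=\sum_{i}a_{i}x_{i}$ and putting $\delta=\min_{i}(o(a_{i})+o(x_{i}))$, multiplicativity shows that the degree-$\delta$ component of $x$ equals $\sum_{o(a_{i})+o(x_{i})=\delta}\mathrm{in}(a_{i})\,\mathrm{in}(x_{i})$, which lies in $(\mathrm{in}x_{1},\dots,\mathrm{in}x_{n})$; if this component is nonzero it is $\mathrm{in}x$ and we are done, while if it vanishes one subtracts the corresponding combination to obtain an element of $I$ of strictly larger order and descends.

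The step I expect to be the main obstacle is precisely this control of cancellation, namely verifying that the minimal basis furnished by Proposition \ref{lm1.01} really is a standard basis for the order valuation; the multiplicativity $\mathrm{in}(ab)=\mathrm{in}(a)\,\mathrm{in}(b)$, valid only because $S$ is a domain, is what keeps the leading terms from dropping in degree and is the feature whose failure makes the statement break down in the non-domain case. Once $\mathrm{gr}_{\mathfrak{m}}(S/I)\cong S/\mathrm{in}I$ is established, the conclusion is immediate: $e(S/I)=e(\mathrm{gr}_{\mathfrak{m}}(S/I))=e(S/\mathrm{in}I)$, the first equality being the invariance of the Hilbert--Samuel multiplicity under passage to the associated graded ring and the second the definition of the multiplicity of the graded ring $S/\mathrm{in}I$.
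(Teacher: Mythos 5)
Your reduction is the same as the paper's: everything rests on $e(S/I)=e(\mathrm{gr}_F(S/I))$ for the $\mathfrak{m}$-adic filtration $F$, together with the claim you correctly isolate as the crux, namely that $\mathrm{in}x\in\mathrm{in}I$ for \emph{every} nonzero $x\in I$ (the paper encodes this as the equality $\mathfrak{m}^{m+1}+I\cap\mathfrak{m}^{m}=\mathfrak{m}^{m+1}+\mathrm{in}I\cap\mathfrak{m}^{m}$ for all $m$). The gap is in your descent step. Suppose the degree-$\delta$ component $\sum_{o(a_i)+o(x_i)=\delta}\mathrm{in}(a_i)\,\mathrm{in}(x_i)$ vanishes, and put $z=\sum_{o(a_i)+o(x_i)=\delta}\mathrm{in}(a_i)\,x_i$. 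Then indeed $z\in I$ and $o(z)>\delta$, but the only representation of $z$ your argument provides is $z=\sum\mathrm{in}(a_i)x_i$ itself, whose $\delta$-value is still $\delta$; so writing $x=(x-z)+z$ merely reassembles the original representation, and no quantity strictly increases. The domain hypothesis gives $\mathrm{in}(ab)=\mathrm{in}(a)\,\mathrm{in}(b)$ for each summand, but it does nothing to exclude cancellation among the leading forms of \emph{different} summands, and that cancellation is the whole difficulty.

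Moreover, this gap cannot be repaired, because the identification $\mathrm{gr}_F(S/I)\cong S/\mathrm{in}I$ --- and with it the Proposition as stated --- is false. Take $S=k[x,y]$, $f=x^2+y^3$, $g=xy$, $I=(f,g)$. Since $f,g$ is a regular sequence, all syzygies of $(f,g)$ are Koszul, so $pf+qg\in\mathfrak{m}I$ exactly when the constant terms $p(0),q(0)$ both vanish; hence every $x\in I\setminus\mathfrak{m}I$ has $\mathrm{in}x=p(0)x^2+q(0)xy$, and therefore $\mathrm{in}I=(x^2,xy)$. Yet $y^4=yf-xg\in I$ while $y^4\notin(x^2,xy)$. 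The ideal generated by the leading forms of \emph{all} elements of $I$ is $(x^2,xy,y^4)$, so $\mathrm{gr}_F(S/I)\cong S/(x^2,xy,y^4)$ is Artinian of length $5$, whereas $S/\mathrm{in}I$ has dimension $1$ and multiplicity $1$; thus $e(S/I)=5\neq 1=e(S/\mathrm{in}I)$. What your (correct) first step does prove is $e(S/I)=e(S/L)$, where $L$ is that full leading-form ideal; the failure lies entirely in equating $L$ with $\mathrm{in}I$, i.e.\ in the standard-basis claim. Note that the paper's own proof breaks at exactly the same point: its assertion that $\mathrm{in}x=\mathrm{in}\sum_i y_i\,\mathrm{in}x_i$ ``since $S$ is a domain'' fails for $x=y^4$ above, where $\mathrm{in}(y\cdot x^2-x\cdot xy)=\mathrm{in}(0)=0\neq y^4$. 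So the obstacle you flagged as the main difficulty is not a technicality you failed to settle; it is a counterexample to the claim both you and the paper are trying to prove.
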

\begin{proof} By Proposition \ref{lm1.01},
there exists a minimal basis $x_1, \ldots, x_n$ of $I$ such that
 $\mathrm{in}x_1, \ldots, \mathrm{in}x_n$ is a  minimal basis  of $\mathrm{in}I.$
Let $x$ be an element of $I.$ Then $x= \sum_{i=1}^ny_ix_i$ for
certain  elements  $y_1, \ldots, y_n \in S.$  Since $S$ is a
domain, it follows that $\mathrm{in}x=
\mathrm{in}\sum_{i=1}^ny_i\mathrm{in}x_i.$ Hence $ \mathrm{in}x
\in \mathrm{in}I$ for all $x \in I.$ Any $z=
\sum_{i=1}^nz_i\mathrm{in}x_i \in \mathrm{in}I$ for $z_1, \ldots,
z_n \in S,$  we have $$\mathrm{in}z=
\mathrm{in}\sum_{i=1}^nz_i\mathrm{in}x_i  =
\mathrm{in}\sum_{i=1}^nz_ix_i$$ since $S$ is a domain. Note that
$u=\sum_{i=1}^nz_ix_i \in I.$ So $\mathrm{in}z = \mathrm{in}u$ for
some $u \in I.$ From this it follows that $$\{\mathrm{in}x \mid x
\in I\} = \{\mathrm{in}x \mid x \in \mathrm{in}I\}.$$ Hence we
have
$$\{\mathrm{in}x \mid x \in I\}\bigcap S_m + \bigoplus_{n \ge m +1
}S_n = \{\mathrm{in}x \mid x \in \mathrm{in}I\}\bigcap S_m +
\bigoplus_{n \ge m +1 }S_n$$ for all $m \in \mathbb{N}.$ Moreover,
it is plain that
$$\{\mathrm{in}x \mid x \in I\}\bigcap S_m + \bigoplus_{n \ge m +1
}S_n = I\bigcap [\bigoplus_{n \ge m }S_n] + \bigoplus_{n \ge m +1
} S_n $$ and $ \{\mathrm{in}x \mid x \in \mathrm{in}I\}\bigcap S_m
+ \bigoplus_{n \ge m +1 }S_n = \mathrm{in}I \bigcap[\bigoplus_{n
\ge m }S_n] + \bigoplus_{n \ge m +1 }S_n.$ So $$\bigoplus_{n \ge m
+1 }S_n+I \bigcap[\bigoplus_{n \ge m }S_n] = \bigoplus_{n \ge m +1
}S_n+ \mathrm{in}I \bigcap[\bigoplus_{n \ge m }S_n]$$ for all $m
\in \mathbb{N}.$  Now since $ {\frak m }^m = \bigoplus_{n \ge m
}S_n,$   we obtain
\begin{equation}\label{eq1} \frak m^{m+1}+I \bigcap\frak m^{m} =
\frak m^{m+1}+ \mathrm{in}I \bigcap\frak m^{m}\end{equation} for
all $m \in \mathbb{N}.$
 Denote by $F= \{{\frak m}^m\}_{m \ge 0}$ the ${\frak
m}$-adic filtration of $S,$ and denote by $$gr_F(S/I) =
\bigoplus_{m \ge {0}} (\frak m^m(S/I)/\frak m^{m+1}(S/I))$$ the
associated graded module of the $S$-module $S/I$ with respect to
the filtration $F.$ It can be verified that
$$gr_F(S/I)\cong \bigoplus_{m \ge {0}} (\frak
m^m+I/\frak m^{m+1}+I)\cong \bigoplus_{m \ge {0}}(\frak m^m/(\frak
m^{m+1}+I \cap\frak m^{m})).$$   Hence by (\ref{eq1})  we get
$$gr_F(S/I) \cong gr_F(S/\mathrm{in}I).$$ Note that
$e(S/I)= e(gr_F(S/I)),$ thus $e(S/I)= e(S/\mathrm{in}I). $
\end{proof}

 \vskip.2cmThe proof  of  Proposition \ref{lm1.1} yields  the following
comment.

 \vskip.2cm\begin{remark} \label{rm1.2c} Let $S$ be a domain. Recall that  we have
$$\{\mathrm{in}x \mid x \in E\} = \{\mathrm{in}x \mid x \in
\mathrm{in}E\}$$ for any ideal $E$ of $S$ by the proof of
Proposition \ref{lm1.1}. And it is a simple master to give
examples that this equation is not true in general if $S$ is not a
domain. Note that  $\mathrm{in}E$ is a homogeneous ideal, we
obtain $\{\mathrm{in}x \mid x \in \mathrm{in}E\} \subset
\mathrm{in}E.$ So $\{\mathrm{in}x \mid x \in E\} \subset
\mathrm{in}E.$ Next, we would like to give an example to show
that this inclusion is not true in general if $S$ is not a domain.

Let  $S = k[X, Y]/(XY, X^2),$ here $k$ is a field and
 $X$ and  $Y$ are variables. Then $S$ is not a domain. Denote by $\bar X$ and  $\bar Y$ the
 images of $X$ and  $Y$ in $S,$ respectively.
Let $x = \bar X + \bar Y^2$ be an element of $S.$  Set $E = (x).$
Then $\mathrm{in}E = (\bar X)$  and $y= x^2 = \bar Y^4 \in E.$
However $\mathrm{in}y = \bar Y^4 \notin \mathrm{in}E.$

 \vskip.2cmThe following corollaries of Proposition \ref{lm1.1} are  a pivotal connection
 for the approach of the paper, and  which
are proven to be useful in  this paper.

 \vskip.2cmLet $I$ be an $\frak m$-primary ideal, and let $S$ be a domain. Then $\frak m^u \subset I$
for a certain integer $u
> 0.$  From this it follows that $\{\mathrm{in}x \mid x
\in \frak m^u\} \subset \{\mathrm{in}x \mid x \in I\} \subset
\mathrm{in}I$ by  Remark \ref {rm1.2c}. So we get $\mathrm{in}
\frak m^u \subset \mathrm{in}I.$  Since $\frak m^u$ is a
homogeneous ideal, $\mathrm{in}\frak m^u = \frak m^u$ by Remark
\ref{rm1.2s}. Hence $\sqrt{\mathrm{in}\frak m^u}  = \frak m.$
Consequently $\mathrm{in}I$ is an $\frak m$-primary ideal of $S.$
 By Proposition \ref {lm1.1}, we have $$e(S/I)=
e(S/\mathrm{in}I).$$ Note that $I$ and $\mathrm{in}I$   are $\frak
m$-primary ideals, it follows that $S/I$  and $S/\mathrm{in}I$ are
Artinian rings. Hence $e(S/I)= \ell(S/I)$ and  $e(S/\mathrm{in}I)
= \ell(S/\mathrm{in}I).$ Thus $$\ell(S/I)= \ell(S/\mathrm{in}I).$$

Hence we get the following result.
\begin{corollary}\label{th1.1h}  Let $I$ be an $\frak m$-primary ideal of $S$. Assume that
$S$ is a domain. Then $\mathrm{in}I$ is an $\frak m$-primary ideal
of $S$ and $\ell(S/I)= \ell(S/\mathrm{in}I).$
 \end{corollary}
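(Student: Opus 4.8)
The plan is to establish the two assertions separately: first that $\mathrm{in}I$ is $\frak m$-primary, and then that the two lengths coincide. For the length equality I would reduce everything to Proposition \ref{lm1.1}: once both $I$ and $\mathrm{in}I$ are known to be $\frak m$-primary, the quotients $S/I$ and $S/\mathrm{in}I$ are Artinian, so their multiplicities agree with their lengths, and the identity $e(S/I) = e(S/\mathrm{in}I)$ furnished by Proposition \ref{lm1.1} immediately becomes $\ell(S/I) = \ell(S/\mathrm{in}I)$. Thus the only genuine content of the corollary is the primariness of $\mathrm{in}I$.

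To see that $\mathrm{in}I$ is $\frak m$-primary, I would start from the fact that $I$ being $\frak m$-primary gives $\frak m^u \subseteq I$ for some integer $u > 0$. The key step is to transport this inclusion through the operator $\mathrm{in}$. Since $S$ is a domain, Remark \ref{rm1.2c} supplies the inclusion $\{\mathrm{in}x \mid x \in I\} \subseteq \mathrm{in}I$; applying it to every $x \in \frak m^u \subseteq I$ yields $\{\mathrm{in}x \mid x \in \frak m^u\} \subseteq \mathrm{in}I$, and hence $\mathrm{in}\,\frak m^u \subseteq \mathrm{in}I$. Because $\frak m^u$ is homogeneous, Remark \ref{rm1.2s} gives $\mathrm{in}\,\frak m^u = \frak m^u$, so in fact $\frak m^u \subseteq \mathrm{in}I \subseteq \frak m$ (the last containment holding since $I \subseteq \frak m$ forces each generator $\mathrm{in}x$ into positive degree). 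Passing to radicals then forces $\sqrt{\mathrm{in}I} = \frak m$, i.e. $\mathrm{in}I$ is $\frak m$-primary.

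The step I expect to carry the whole argument is the inclusion $\{\mathrm{in}x \mid x \in I\} \subseteq \mathrm{in}I$, and this is precisely where the domain hypothesis is indispensable: in general $\mathrm{in}\,\frak m^u \subseteq \mathrm{in}I$ can fail because initial forms need not behave multiplicatively in the presence of zero-divisors, as the explicit example in Remark \ref{rm1.2c} shows. Granting that inclusion, the remainder is routine: the passage from multiplicity to length uses only that an ideal of a standard graded algebra is $\frak m$-primary exactly when its quotient is zero-dimensional (Artinian), in which case the Hilbert--Samuel multiplicity equals the total length. I would therefore present the proof as the radical computation above to obtain $\frak m$-primariness of $\mathrm{in}I$, followed by a citation of Proposition \ref{lm1.1} together with the dimension-zero identity $e(\,\cdot\,) = \ell(\,\cdot\,)$ to conclude $\ell(S/I) = \ell(S/\mathrm{in}I)$.
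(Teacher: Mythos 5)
Your proposal is correct and follows essentially the same route as the paper: the paper also deduces $\frak m^u = \mathrm{in}\,\frak m^u \subseteq \mathrm{in}I$ from the domain inclusion $\{\mathrm{in}x \mid x \in I\} \subseteq \mathrm{in}I$ of Remark \ref{rm1.2c} together with Remark \ref{rm1.2s}, concludes $\frak m$-primariness by taking radicals, and then converts the multiplicity identity of Proposition \ref{lm1.1} into the length identity via the Artinian property of the quotients. Your explicit justification that $\mathrm{in}I \subseteq \frak m$ is a minor addition the paper leaves implicit, but the argument is otherwise identical.
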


Now assume that $S$ is a domain with $\dim S = d
>0$ and  $q$ is a parameter ideal of $S.$ By Proposition \ref{lm1.01},
there exists a minimal basis $x_1, \ldots, x_d$ of $q$ such that
 $\mathrm{in}x_1, \ldots, \mathrm{in}x_d$ is a  minimal basis  of $\mathrm{in}q.$
Note that $\mathrm{in}q$ is an $\frak m$-primary ideal of $S$ by
Corollary \ref{th1.1h}.
  So $x_1, \ldots, x_d$ and  $\mathrm{in}x_1, \ldots,
\mathrm{in}x_d$  are systems of parameters of $S.$ Hence these
systems are algebraically independent over $k$ (see e.g. \cite
[Corollary 11.21]{A}). Therefore it can be verified that  $$
\{\mathrm{in}x_1^{n_1}\cdots\mathrm{in}x_d^{n_d} =
\mathrm{in}[x_1^{n_1}\cdots x_d^{n_d}] \mid n_1+\cdots+n_d =n \}$$
is a minimal basis of $\mathrm{in}q^n.$ So $\mathrm{in}q^n =
(\mathrm{in}x_1, \ldots, \mathrm{in}x_d)^n = (\mathrm{in}q)^n$ for
all $n >0.$ Then by Corollary \ref{th1.1h}, we get
  $$\ell(S/q^n)=
\ell(S/(\mathrm{in}q)^n)$$ for all $n
>0.$ Consequently
$$ e(q; S)= e(\mathrm{in}q; S).$$

These comments yield:
\begin{corollary}\label{th1.1q}   Assume that
$S$ is a domain and   $q$ is a parameter ideal of $S.$ Then
$\mathrm{in}q$ is a parameter ideal of $S$ and $ e(q; S)=
e(\mathrm{in}q; S).$
 \end{corollary}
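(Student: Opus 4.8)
The plan is to reduce the computation of $e(q;S)$ to the length formula already established in Corollary \ref{th1.1h}, by showing that the Hilbert--Samuel functions of $q$ and $\mathrm{in}q$ agree. Since $e(q;S)$ is read off from the leading term of $\ell(S/q^n)$ as a polynomial in $n$, it suffices to prove $\ell(S/q^n) = \ell(S/(\mathrm{in}q)^n)$ for all large $n$.

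First I would invoke Proposition \ref{lm1.01} to select a minimal basis $x_1,\ldots,x_d$ of $q$ for which $\mathrm{in}x_1,\ldots,\mathrm{in}x_d$ is a minimal basis of $\mathrm{in}q$. Applying Corollary \ref{th1.1h} to the $\frak m$-primary ideal $q$ shows that $\mathrm{in}q$ is again $\frak m$-primary, so both $x_1,\ldots,x_d$ and $\mathrm{in}x_1,\ldots,\mathrm{in}x_d$ are systems of parameters of $S$. The fact I would exploit is that a system of parameters of a domain is algebraically independent over $k$, which may be cited from \cite[Corollary 11.21]{A}.

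The key identity is $\mathrm{in}(q^n) = (\mathrm{in}q)^n$ for every $n > 0$. Because $S$ is a domain there is no cancellation of leading forms under multiplication, so $\mathrm{in}(x_1^{n_1}\cdots x_d^{n_d}) = (\mathrm{in}x_1)^{n_1}\cdots(\mathrm{in}x_d)^{n_d}$ for every exponent vector with $n_1 + \cdots + n_d = n$. The algebraic independence of the $\mathrm{in}x_i$ guarantees these products are pairwise distinct and $k$-linearly independent, hence form a minimal basis of $(\mathrm{in}q)^n$; the same monomials exhaust the initial forms arising from elements of $q^n$, so $\mathrm{in}(q^n) = (\mathrm{in}q)^n = (\mathrm{in}x_1,\ldots,\mathrm{in}x_d)^n$. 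Once this is in hand, Corollary \ref{th1.1h} applied to the $\frak m$-primary ideal $q^n$ yields $\ell(S/q^n) = \ell(S/\mathrm{in}(q^n)) = \ell(S/(\mathrm{in}q)^n)$, and comparing leading coefficients of the associated Hilbert--Samuel polynomials gives $e(q;S) = e(\mathrm{in}q;S)$.

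I expect the main obstacle to be the identity $\mathrm{in}(q^n) = (\mathrm{in}q)^n$. In general the initial ideal of a power may strictly contain the power of the initial ideal, and it is precisely the domain hypothesis, together with the algebraic independence of a system of parameters, that forces equality here by ruling out any collapse among the monomials $(\mathrm{in}x_1)^{n_1}\cdots(\mathrm{in}x_d)^{n_d}$.
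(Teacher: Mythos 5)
Your proposal is correct and follows essentially the same route as the paper: select a minimal basis via Proposition \ref{lm1.01}, note $\mathrm{in}q$ is $\frak m$-primary by Corollary \ref{th1.1h}, use algebraic independence of systems of parameters (same citation) to establish $\mathrm{in}(q^n) = (\mathrm{in}q)^n$, and then apply Corollary \ref{th1.1h} to $q^n$ to compare Hilbert--Samuel functions. No meaningful differences to report.
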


\end{remark}

Let
 $I$ be an  ideal of   a  Noetherian   local ring  $(A, \frak n)$  with the maximal
ideal $\frak{n}.$  An ideal $J$ is called a {\it reduction} of an
ideal $I$ if $J \subseteq I$ and $I^{n+1} = JI^n$ for some $n.$ A
reduction $J$ is called a {\it minimal reduction } of $I$ if it
does not properly contain any other reduction of $I$ \cite{NR}.
Set $\ell(I) = \dim\underset{n\ge0}\bigoplus (I^n/{\frak n}I^n).$
Then one called $\ell(I)$  the {\it analytic spread } of $I.$ If
the residue field $k = A/{\frak n}$ is infinite, then the minimal
number of generators of every minimal reduction of $I$ is equal to
$\ell(I)$ \cite{NR}. It is well known that $\mathrm{ht}I \le
\ell(I) \le \dim A,$ and $I$ is called an {\it equimultiple ideal
} if $\ell(I) = \mathrm{ht}I.$

\vskip.2cm The following result will play a crucial role in the
approach  of this paper.

\begin{theorem}\label{th1.1a} Let $S=\bigoplus_{n\ge 0} S_n$ be  a finitely generated standard
$\mathbb{N}$-graded algebra over an infinite field  $k$ of $\dim S
= d >0.$ Let $I$ be an $\frak m$-primary ideal of $S$. Assume that
$S$ is a domain and
  $(c_1,\ldots,c_{d})$ is the degree sequence of a
  minimal reduction of $I.$
 Then we have $e(I; S) = c_1\cdots c_d e(S).$
  \end{theorem}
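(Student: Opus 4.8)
The plan is to reduce the multiplicity computation for the $\frak{m}$-primary ideal $I$ to a computation for the monomial-like ideal $\mathrm{in}J$, where $J$ is a minimal reduction of $I$ whose degree sequence is $(c_1,\ldots,c_d)$. First I would invoke the hypothesis that $k$ is infinite together with the fact that $S$ is a domain of dimension $d$: since $I$ is $\frak{m}$-primary, a minimal reduction $J$ of $I$ is generated by exactly $\ell(I)=\dim S=d$ elements, so $J=(x_1,\ldots,x_d)$ is a parameter ideal, and by Proposition~\ref{lm1.01}(ii) I may choose this basis so that $\mathrm{in}x_1,\ldots,\mathrm{in}x_d$ is a minimal basis of $\mathrm{in}J$. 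The key observation, furnished by Proposition~\ref{lm1.1v} and Remark~\ref{rm1.2u}, is that $(o(x_1),\ldots,o(x_d))$ is (up to permutation) the degree sequence $(c_1,\ldots,c_d)$ of $J$, i.e. $\deg\mathrm{in}x_i=c_i$ for each $i$.

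Next I would show that the multiplicity of $I$ equals that of its minimal reduction, and that this in turn equals the multiplicity of the associated initial ideal. Since $J$ is a reduction of $I$, a standard fact in multiplicity theory gives $e(I;S)=e(J;S)$. Applying Corollary~\ref{th1.1q} to the parameter ideal $J$ yields that $\mathrm{in}J$ is again a parameter ideal and $e(J;S)=e(\mathrm{in}J;S)$. Combining these two equalities reduces the theorem to proving
\[
e(\mathrm{in}J;S)=c_1\cdots c_d\,e(S).
\]
This is the heart of the argument and is exactly where the domain hypothesis pays off: $\mathrm{in}J$ is generated by the homogeneous elements $\mathrm{in}x_1,\ldots,\mathrm{in}x_d$ of degrees $c_1,\ldots,c_d$, and (as in the discussion preceding Corollary~\ref{th1.1q}) these form a system of parameters and are therefore algebraically independent over $k$, so $(\mathrm{in}J)^n$ has the clean minimal basis consisting of the monomials $\mathrm{in}x_1^{n_1}\cdots\mathrm{in}x_d^{n_d}$ with $n_1+\cdots+n_d=n$.

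To finish, I would compute $e(\mathrm{in}J;S)$ by comparing the Hilbert function of $S/(\mathrm{in}J)^n$ against that of $S/\frak{m}^{N}$ for suitable $N$, using the graded structure and the degrees $c_i$. The idea is that multiplying by the homogeneous parameters $\mathrm{in}x_i$ of degree $c_i$ scales dimensions in a way governed by the product $c_1\cdots c_d$, so that the leading coefficient of the length function $\ell(S/(\mathrm{in}J)^n)$, normalized by $d!/n^d$, comes out to $c_1\cdots c_d\,e(S)$; concretely one can realize this through an exact-sequence or Koszul-type argument on the homogeneous regular sequence $\mathrm{in}x_1,\ldots,\mathrm{in}x_d$ in the Cohen--Macaulay-in-the-relevant-sense graded domain $S$, each factor of which contributes its degree $c_i$ to the multiplicity. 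I expect this last step—establishing rigorously that the homogeneous system of parameters of degrees $c_i$ multiplies the multiplicity of $S$ by exactly $c_1\cdots c_d$—to be the main obstacle, since it requires controlling the leading term of the Hilbert polynomial and justifying that the sequence behaves like a regular sequence for the purpose of the multiplicity computation; the domain assumption on $S$ is precisely what makes $\mathrm{in}x_i=\mathrm{in}\!\big(x_1^{n_1}\cdots\big)$ behave multiplicatively and is indispensable, as the counterexample flagged in Remark~\ref{rm35} confirms.
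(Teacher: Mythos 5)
Your reduction steps coincide exactly with the paper's: $e(I;S)=e(J;S)$ for a minimal reduction $J$ (Northcott--Rees), $e(J;S)=e(\mathrm{in}J;S)$ by Corollary~\ref{th1.1q}, and Proposition~\ref{lm1.01} supplying a minimal basis $x_1,\ldots,x_d$ of $J$ whose initial forms $y_i=\mathrm{in}x_i$ generate $\mathrm{in}J$ with $\deg y_i=c_i$. The genuine gap is in the one step you yourself flag as the main obstacle: proving $e(\mathrm{in}J;S)=c_1\cdots c_d\,e(S)$. Your proposed realization treats $y_1,\ldots,y_d$ as a homogeneous \emph{regular} sequence and runs a Koszul/Hilbert-series argument, but regularity of a homogeneous system of parameters is equivalent to Cohen--Macaulayness in the graded setting, and a graded domain need not be Cohen--Macaulay: for instance $k[s^4,s^3t,st^3,t^4]$, regraded so the four generators have degree $1$, is a two-dimensional standard graded domain of depth $1$. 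In such a ring the identity $H_{S/(y_1,\ldots,y_d)}(t)=\prod_{i}(1-t^{c_i})\,H_S(t)$ fails and one only has $\ell\big(S/(y_1,\ldots,y_d)\big)\ge e\big((y_1,\ldots,y_d);S\big)$, so ``each factor contributes its degree $c_i$'' cannot be extracted this way. You also misplace the role of the domain hypothesis: it is spent entirely in Corollary~\ref{th1.1q} (passing from the non-homogeneous ideal $J$ to $\mathrm{in}J$; note that the counterexample of Remark~\ref{rm35} involves the non-homogeneous parameter $\bar X+\bar Y^2$), whereas the remaining assertion about a homogeneous system of parameters is valid in \emph{any} standard graded algebra and needs neither the domain nor any Cohen--Macaulay assumption.

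The paper closes this step with an elementary trick (recalled from \cite[Lemma 3.3]{VT0}) that bypasses regularity entirely: put $c=c_1\cdots c_d$ and $u_i=c/c_i$, so that $y_1^{u_1},\ldots,y_d^{u_d}$ all lie in $S_c$. A homogeneous system of parameters of a single common degree $c$ generates a reduction of $\frak m^c$ (since $S$ is module-finite over $k[y_1^{u_1},\ldots,y_d^{u_d}]$), whence $e\big((y_1^{u_1},\ldots,y_d^{u_d});S\big)=e(\frak m^c;S)=c^d e(S)$; on the other hand the standard formula for powers of a system of parameters gives $e\big((y_1^{u_1},\ldots,y_d^{u_d});S\big)=u_1\cdots u_d\,e\big((y_1,\ldots,y_d);S\big)$. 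Comparing, $e(\mathrm{in}J;S)=c^d e(S)/(u_1\cdots u_d)=c_1\cdots c_d\,e(S)$. (Alternatively the paper cites Trung's result $e(G(E))=a_1\cdots a_h\,e(S)$.) If you replace your Koszul step by this power trick, the rest of your outline is correct and agrees with the paper's proof.
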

\begin{proof} Let $J$ be a
  minimal reduction of $I$ of the degree sequence
  $(c_1,\ldots,c_{d}).$ Note that since $I$ is  $\frak
m$-primary, it follows that $\ell(I) = \mathrm{ht}I = d.$ Hence
$\mu(J)= d$ and $J$ is a parameter ideal of $S.$
  Recall that by Corollary \ref{th1.1q}, we have $ e(J; S)=e(\mathrm{in}J; S).$
Since $J$ is a   minimal reduction of $I,$ we obtain $e(I; S)=
e(J; S)$ by \cite[Theorem 1]{NR}.   By Proposition \ref{lm1.01},
there exists a minimal basis $x_1, \ldots, x_d$ of $J$ such that
 $\mathrm{in}x_1, \ldots, \mathrm{in}x_d$ is a minimal basis of
$\mathrm{in}J.$ Set $y_i = \mathrm{in}x_i$ for all $1 \le i \le
d.$ Then $y_1,\ldots, y_d$ is a system of parameters for $S,$ and
one can consider the degree sequence of $J$ as
$$(\deg y_1, \ldots, \deg y_d) = (c_1,\ldots,c_{d}).$$  Next
we recall the following proof in \cite [Lemma 3.3]{VT0} for $
e(\mathrm{in}J; S) = c_1\cdots c_d e(S).$

Put $c = c_1\cdots c_d$ and $u_i = c/c_i$ for all $1 \le i \le d.$
 Then we have
 $\deg y_i^{u_i} = c$ for all $1 \le i \le d.$ So
 $$ \{{y_1}^{u_1},\ldots, {y_d}^{u_d}\} \subset S_c.$$
From the above facts, we get $$ c^de(S)= e(({y_1}^{u_1},\ldots,
{y_d}^{u_d}); S)= u_1\cdots u_d e((y_1, \ldots, y_d); S) =
u_1\cdots u_d e(\mathrm{in}J; S).$$ Consequently,  $
e(\mathrm{in}J; S) = c_1\cdots c_d e(S).$ We get $e(I; S) =
c_1\cdots c_d e(S).$
\end{proof}

Notice  that when $E$ is a homogeneous ideal generated by a
subsystem of parameters  of $S$ and $(a_1,\ldots, a_h)$ is the
degree sequence of $E,$ upon special computations, Trung in
\cite{Tr} proved that
$$e(G(E)) = a_1\cdots a_h e(S),$$ here $G(E)=\bigoplus_{n\ge  0}(E^n/E^{n+1})$
 is the associated graded ring of $E.$
Using this result, one can also get $$ e(\mathrm{in}J; S) =
c_1\cdots c_d e(S)$$ since $e(G(\mathrm{in}J)) = e(\mathrm{in}J;
S).$

\begin{remark}\label{rm35}
From Theorem \ref{th1.1a}, one may raise a question:
 Does the theorem hold if $S$ is not a domain?
 Consider the case $S = k[X, Y]/(XY, X^2),$ here $k$ is a field and
 $X$ and  $Y$ are variables (see Remark \ref{rm1.2c}). Denote by $\bar X$ and  $\bar Y$ the
 images of $X$ and  $Y$ in $S,$ respectively.
Then it is easily seen that $\dim S= 1;$ $e(S) = 1$ and $x = \bar
X + \bar Y^2$ is a system of parameters for $S;$  $x^2 = \bar
Y^{4}$ and $o(x) =1.$  Now if Theorem \ref{th1.1a} is true for
$S,$ then $e(x; S) = e(S).$ So $e(x^2; S) = 2e(S).$ Because
$e(x^2; S) = e(\bar Y^{4};S) = 4e(\bar Y;S),$  $e(S)=2e(\bar
Y;S).$  Hence  $e(S) \ne 1.$
 This example shows that
Theorem \ref{th1.1a} does not hold in general if one omits  the
assumption that $S$ is a domain.
\end{remark}

\vskip.2cm However, using Theorem \ref {th1.1a} we prove the
following theorem for multiplicities of arbitrary
$\mathbb{N}$-graded algebras.

\begin{theorem}\label{th1.1v} Let $S=\bigoplus_{n\ge 0} S_n$ be  a finitely generated standard
$\mathbb{N}$-graded algebra over an infinite field  $k$ of $\dim S
= d >0.$ Let $x_1, \ldots, x_d$  be a system of parameters of $S.$
Assume that   $\mathrm{in}x_1, \ldots, \mathrm{in}x_d$ is a system
of parameters of $S.$ Then we have
$$e(x_1,\ldots,
x_d; S)= o(x_1)\cdots o(x_d) e(S).$$
\end{theorem}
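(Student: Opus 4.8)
The plan is to reduce the statement, in which $S$ is an arbitrary graded algebra, to the domain case already settled in Corollary~\ref{th1.1q} and in the homogeneous computation recalled in the proof of Theorem~\ref{th1.1a}, using the associativity formula for multiplicities. Write $d=\dim S$ and let $\mathfrak p_1,\ldots,\mathfrak p_r$ be the (homogeneous) minimal primes of $S$ with $\dim S/\mathfrak p_j=d$; set $\ell_j=\ell_{S_{\mathfrak p_j}}(S_{\mathfrak p_j})$. Since $(x_1,\ldots,x_d)$ is $\mathfrak m$-primary, the associativity formula gives
$$e(x_1,\ldots,x_d;S)=\sum_{j=1}^r \ell_j\, e(x_1,\ldots,x_d;S/\mathfrak p_j),$$
while applying it to $\mathfrak m$ gives $e(S)=\sum_{j=1}^r \ell_j\, e(S/\mathfrak p_j)$. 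Thus it suffices to prove, for each $j$, the termwise identity $e(x_1,\ldots,x_d;S/\mathfrak p_j)=o(x_1)\cdots o(x_d)\,e(S/\mathfrak p_j)$; substituting this into the first display and factoring out $o(x_1)\cdots o(x_d)$ then finishes the proof.

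Fix $j$, write $\bar S=S/\mathfrak p_j$ (a graded domain of dimension $d$) and $\bar\cdot$ for images. The first thing I would check is that the hypothesis on the initial forms survives the quotient. Because $(x_1,\ldots,x_d)$ and $(\mathrm{in}x_1,\ldots,\mathrm{in}x_d)$ are $\mathfrak m$-primary in $S$, their images generate ideals primary to the maximal homogeneous ideal of $\bar S$, so $\bar x_1,\ldots,\bar x_d$ and $\overline{\mathrm{in}x_1},\ldots,\overline{\mathrm{in}x_d}$ are each systems of parameters of $\bar S$. In particular none of the $\overline{\mathrm{in}x_i}$ can vanish, since by Krull's theorem a height-$d$ ideal needs at least $d$ generators; hence $\mathrm{in}x_i\notin\mathfrak p_j$, the least-degree component of $\bar x_i$ is exactly $\overline{\mathrm{in}x_i}$, and so $\mathrm{in}\bar x_i=\overline{\mathrm{in}x_i}$ and $o(\bar x_i)=o(x_i)$. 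The same height count shows no two of the $\mathrm{in}\bar x_i$ are scalar multiples of one another, so by Remark~\ref{rm1.2u}, applied to the minimal basis $\bar x_1,\ldots,\bar x_d$ of $q:=(\bar x_1,\ldots,\bar x_d)$, the elements $\mathrm{in}\bar x_1,\ldots,\mathrm{in}\bar x_d$ form a minimal basis of $\mathrm{in}q$; that is, $\mathrm{in}q=(\mathrm{in}\bar x_1,\ldots,\mathrm{in}\bar x_d)$.

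I would then close the termwise identity inside the domain $\bar S$. By Corollary~\ref{th1.1q} we have $e(q;\bar S)=e(\mathrm{in}q;\bar S)$. The ideal $\mathrm{in}q$ is generated by the homogeneous system of parameters $\mathrm{in}\bar x_1,\ldots,\mathrm{in}\bar x_d$ of degrees $\deg\mathrm{in}\bar x_i=o(\bar x_i)=o(x_i)$, so the homogeneous system-of-parameters computation recalled in the proof of Theorem~\ref{th1.1a} (from \cite{VT0}) gives $e(\mathrm{in}q;\bar S)=o(x_1)\cdots o(x_d)\,e(\bar S)$. Combining, $e(x_1,\ldots,x_d;\bar S)=e(q;\bar S)=o(x_1)\cdots o(x_d)\,e(\bar S)$, as required.

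The main obstacle is the middle step, namely controlling the initial forms under passage to the top-dimensional domain quotients. The whole argument rests on the fact that $\mathrm{in}x_i\notin\mathfrak p_j$ for every minimal prime $\mathfrak p_j$ with $\dim S/\mathfrak p_j=d$, so that $o(x_i)$ is not disturbed in any component and the initial forms keep generating $\mathrm{in}q$. This is exactly where the hypothesis that $\mathrm{in}x_1,\ldots,\mathrm{in}x_d$ is a system of parameters is indispensable: without it, some initial term could lie in a top-dimensional minimal prime, the degree count $o(x_i)$ would drop in that component, and the clean product formula would fail, in accordance with the non-domain counterexample of Remark~\ref{rm35}.
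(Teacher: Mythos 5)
Your proof is correct and takes essentially the same route as the paper's: both reduce via the associativity formula to the top-dimensional (homogeneous, domain) quotients $S/\mathfrak p$, check that the initial forms survive the quotient so that $\mathrm{in}\bar x_i=\overline{\mathrm{in}x_i}$ and $o(\bar x_i)=o(x_i)$ (via Remark \ref{rm1.2u}), and then apply the domain result termwise. The only cosmetic difference is that you inline the proof of Theorem \ref{th1.1a} (Corollary \ref{th1.1q} plus the homogeneous system-of-parameters computation) where the paper cites Theorem \ref{th1.1a} as a black box.
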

\begin{proof}
Denote by $\Lambda$ the set of minimal prime ideals $\mathfrak{p}$
of $S$ such that $\dim S/\mathfrak{p}=d.$ For any $\mathfrak{p}
\in \Lambda,$ denote by ${\bar x}_1, \ldots, {\bar x}_d$ and
$\overline{\mathrm{in}x}_1, \ldots, \overline{\mathrm{in}x}_d$ the
images of $x_1, \ldots, x_d$ and $\mathrm{in}x_1, \ldots,
\mathrm{in}x_d$ in $S/\mathfrak{p},$ respectively. Since
$\mathfrak{p}$ is a minimal prime ideal with $\dim S/\mathfrak{p}
= d,$  it follows that $\mathfrak{p}$ is a homogeneous ideal and
 ${\bar x}_1, \ldots, {\bar
x}_d$ and $\overline{\mathrm{in}x}_1, \ldots,
\overline{\mathrm{in}x}_d$ are systems of parameters of the graded
algebra $S/\mathfrak{p}.$ Note that $\overline{\mathrm{in}x}_i \ne
0$ for all $1 \le i \le d,$ we get $\overline{\mathrm{in}x}_i
=\mathrm{in}{\bar x}_i$ for all $1 \le i \le d.$ Since
$\mathrm{in}{\bar x}_1, \ldots, \mathrm{in}{\bar x}_d$ is a system
of parameters of $S/\mathfrak{p},$ it follows that
$a\mathrm{in}{\bar x}_i \ne b\mathrm{in}{\bar x}_j$ for all $ 0
\ne a, b \in k$ and $1 \le i \ne j \le n.$
 Hence  the degree sequence of $({\bar x}_1,
\ldots, {\bar x}_d)$ is a permutation of $\deg\mathrm{in}{\bar
x}_1, \ldots, \deg{\bar x}_d$  by Remark \ref {rm1.2u}. Since
$S/\mathfrak{p}$ is a domain, by Theorem \ref {th1.1a} we get
$$e({\bar x}_1, \ldots, {\bar x}_d; S/\mathfrak{p})= o({\bar
x}_1)\cdots o({\bar x}_d) e(S/\mathfrak{p}).$$ Note that $o({\bar
x}_i)= o(x_i)$ for all $1 \le i \le d$ and if consider
$S/\mathfrak{p}$ as a $S$-module then $$e({\bar x}_1, \ldots,
{\bar x}_d; S/\mathfrak{p})= e_S(x_1, \ldots, x_d;
S/\mathfrak{p}).$$ So $e_S(x_1, \ldots, x_d; S/\mathfrak{p}) =
o(x_1)\cdots o(x_d)e(S/\mathfrak{p}).$ Remember that
$$e(x_1,\ldots,x_d; S)
=\sum_{\mathfrak{p}\in\Lambda}\ell(S_{\mathfrak{p}})e_S(x_1,\ldots,x_d;
S/\mathfrak{p})$$ (see \cite[Theorem 11.2.4]{SH}), we obtain
$$e(x_1,\ldots,x_d; S)
= o(x_1)\cdots o(x_d)
\sum_{\mathfrak{p}\in\Lambda}\ell(S_{\mathfrak{p}})e(S/\mathfrak{p})=
o(x_1)\cdots o(x_d)e(S).$$ Thus $e(x_1,\ldots, x_d; S)=
o(x_1)\cdots o(x_d) e(S).$
\end{proof}

\vskip.2cmWe recall now the notion of weak-(FC)-sequences in
\cite{Vi}. This sequence is a kind of superficial sequences,  and
it has proven to be useful in several  contexts (see e.g.
\cite{DMT, DV, Fiber, VT}).
\begin{definition}[\cite{Vi}] \label{de1.2}
Let  $(A, \frak n)$  be  a  Noetherian   local ring with maximal
ideal $\frak{n}.$ Let $I_1,\ldots,I_s$ be ideals of $A$ such that
 $I= I_1\cdots I_s$ is non-nilpotent.
 An element $x \in
A$ is called a  {\it weak}-(FC)-{\it element} of $(I_1,\ldots,
I_s)$ if there exists $i \in \{ 1, \ldots, s\}$ such that $x \in
I_i$ and the following conditions are satisfied:
 \begin{enumerate}[(FC1):]
 \item For all large $n_i$
and for all $n_1,\ldots,n_{i-1},n_{i+1}, \ldots, n_s \geq 0,$
$$(x)\bigcap I_1^{n_1}\cdots I_i^{{n_i}+1}\cdots I_s^{n_s} =
xI_1^{n_1}\cdots I_i^{n_i}\cdots I_s^{n_s}.$$ \item $x$ is an
$I$-filter-regular element, i.e.,\;$0_A:x \subseteq 0_A:
I^{\infty}.$
 \end{enumerate}
        Let $x_1, \ldots, x_t$ be elements of $A$. For any $0\le i \le t,$
set      $A_i = \dfrac{A}{(x_1, \ldots, x_{i})}$.  Then $x_1,
\ldots, x_t$ is called a  {\it weak}-(FC)-{\it sequence}
 of
$(I_1,\ldots, I_s)$ if $\bar x_{i + 1}$ (the image of $x_{i + 1}$
in $A_i$) is a weak-(FC)-element  of $(I_1A_i,\ldots, I_sA_i)$ for
all $i = 0, \ldots, t-1$. If a weak-(FC)-sequence consists of
$k_1$ elements of $I_1,\ldots,k_s$ elements of $I_s$
($k_1,\ldots,k_s \ge 0$), then it is called  a weak-(FC)-sequence
of $(I_1,\ldots,I_s)$ of {\it the type} $(k_1,\ldots,k_s)$. A
weak-(FC)-sequence $x_1, \ldots, x_t$ is called a {\it maximal
weak-(FC)-sequence} if  $I\subseteq \sqrt{(x_1, \ldots, x_t)}.$
\end{definition}

   Let $\frak I$ be  $\frak n$-primary
and $I$  a non-nilpotent ideal of $A.$  By using Rees'lemma in
\cite{Re}, one (see e.g. \cite {Vi4, Vi2, Vi3,Fiber}) showed that
if $I$ is non-nilpotent, the length of any maximal
weak-(FC)-sequence in $I$ with respect to $(I, \frak I)$ is equal
to $\ell(I)=\ell$ and if
   $x_1,\ldots, x_{\ell}$ is a
weak-(FC)-sequence in $I$ with respect to $(I, \frak I),$ then
  $(x_1, \ldots,x_{\ell})$ is
a minimal reduction of $I.$ Moreover, as in the proof of Theorem
3.3 \cite {Vi4}, then any
 minimal reduction of $I$
is generated by a maximal weak-(FC)-sequence in $I$ with respect
to $(I, \frak I)$ (see e.g. \cite{VD, VT0, VT10}).

\vskip.2cmBefore giving the results in the rest of this section,
we need the following lemma on the relationship between  minimal
reductions and maximal weak-(FC)-sequences.

\begin{lemma}\label{lm1.4} Let $I$ be a non-nilpotent ideal of
$(A, \frak n)$ and $J$ a minimal reduction of $I.$  Then  $J$ is
generated by a  weak-$(FC)$-sequence of length $\ell(I)$  in $J$
of $(J, \frak n)$ and this sequence is also a  weak-(FC)-sequence
in $I$ of $(I, \frak n).$
 \end{lemma}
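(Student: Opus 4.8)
The plan is to produce a single sequence $x_1,\dots,x_\ell$ that generates $J$ and is simultaneously a weak-(FC)-sequence of $(J,\frak n)$ and of $(I,\frak n)$; the real work will be the passage from $J$ to $I$. First I would observe that $J$ is a minimal reduction of \emph{itself}: reductions are transitive, so any reduction $J'\subseteq J$ of $J$ is also a reduction of $I$, and since $J$ is a minimal reduction of $I$ with $J'\subseteq J$, minimality forces $J'=J$, so $J$ admits no proper reduction. Applying the quoted fact that, over an infinite residue field, the minimal number of generators of a minimal reduction equals the analytic spread --- once to $J$ as a minimal reduction of $I$ and once to $J$ as a minimal reduction of itself --- gives $\mu(J)=\ell(I)$ and $\mu(J)=\ell(J)$, hence $\ell(I)=\ell(J)=:\ell$. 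Now I would invoke the result recalled just before the lemma, that every minimal reduction of an ideal is generated by a maximal weak-(FC)-sequence in that ideal with respect to the pair formed by the ideal and any $\frak n$-primary ideal. Taking this $\frak n$-primary ideal to be $\frak n$ and applying the result to $J$ (a minimal reduction of $J$) yields a maximal weak-(FC)-sequence $x_1,\dots,x_\ell$ in $J$ of $(J,\frak n)$ generating $J$, of length $\ell=\ell(I)$ by the length statement for maximal weak-(FC)-sequences. This settles the first assertion.

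It remains to check that the same $x_1,\dots,x_\ell$ is a weak-(FC)-sequence in $I$ of $(I,\frak n)$; equivalently, for each $i$ that $\bar x_{i+1}$ is a weak-(FC)-element of $(IA_i,\frak n A_i)$. Fix $i$, put $B=A_i=A/(x_1,\dots,x_i)$ with maximal ideal $\frak q=\frak n B$, and write $\bar I=IB$, $\bar J=JB$, $\bar x=\bar x_{i+1}$. The key structural input is that the reduction relation descends: from $I^{n+1}=JI^n$ for $n\gg0$ one gets $\bar I^{n+1}=\bar J\bar I^n$, so $\bar J$ is a reduction of $\bar I$ and in particular $\sqrt{\bar J}=\sqrt{\bar I}$. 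Condition (FC2) is then free: the requirement $0_B:\bar x\subseteq 0_B:(\bar I\frak q)^\infty$ coincides with $0_B:\bar x\subseteq 0_B:(\bar J\frak q)^\infty$, which holds by construction, since $\sqrt{\bar I\frak q}=\sqrt{\bar I}=\sqrt{\bar J}=\sqrt{\bar J\frak q}$. Thus everything reduces to transferring (FC1), which in colon form reads $(\bar I^{n+1}\frak q^m:\bar x)=\bar I^n\frak q^m+(0_B:\bar x)$ for $n\gg0$ and all $m\ge0$, and must be deduced from the corresponding equality with $\bar J$ in place of $\bar I$.

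This transfer is where I expect the main obstacle to lie. The naive move of absorbing the fixed factor $\bar I^r$ in $\bar I^{n+1}=\bar J^{\,n+1-r}\bar I^r$ through the crude bound $\bar I^r\subseteq\frak q^r$ loses too much information and cannot reproduce the sharp right-hand side $\bar x\,\bar I^n\frak q^m$. The honest route is to compare the two bigraded filtrations $\{\bar J^n\frak q^m\}$ and $\{\bar I^n\frak q^m\}$ directly: since $\bar J$ is a reduction of $\bar I$, the Rees algebra of $\bar I$ is a finite module over that of $\bar J$, so the two filtrations are cofinal, with $\bar J^n\subseteq\bar I^n\subseteq\bar J^{\,n-r}$ for $n\ge r$, and an Artin--Rees type argument shows that the colon ideals $(\bar I^{n+1}\frak q^m:\bar x)$ stabilize in high degree to $\bar I^n\frak q^m+(0_B:\bar x)$ precisely when the analogous $\bar J$-colon ideals do. Carrying out this stabilization carefully, and verifying its uniformity in the second index $m$, is the technical heart of the argument; everything else is formal.
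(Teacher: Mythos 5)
Your first half is fine, and is even slightly slicker than the paper: observing that $J$ is a minimal reduction of itself (transitivity of reductions plus minimality kills any proper reduction of $J$), and then applying the recalled generation-and-length facts to $J$ in place of $I$, does produce a maximal weak-(FC)-sequence $x_1,\ldots,x_\ell$ in $J$ of $(J,\frak n)$ generating $J$ with $\ell=\ell(J)=\ell(I)$. The problem is the second half, and you have located it yourself: the transfer of (FC1) from $(J,\frak n)$ to $(I,\frak n)$ is not carried out, and the Artin--Rees/cofinality sketch you offer in its place does not work. From $(\bar J^{\,n+1}\frak q^m:\bar x)=\bar J^{\,n}\frak q^m+(0_B:\bar x)$ and the inclusions $\bar J^{\,n+1}\frak q^m\subseteq\bar I^{\,n+1}\frak q^m\subseteq\bar J^{\,n+1-r}\frak q^m$ all you can extract is the sandwich
$$\bar J^{\,n}\frak q^m+(0_B:\bar x)\;\subseteq\;(\bar I^{\,n+1}\frak q^m:\bar x)\;\subseteq\;\bar J^{\,n-r}\frak q^m+(0_B:\bar x),$$
and the target $\bar I^{\,n}\frak q^m+(0_B:\bar x)$ sits strictly between the two ends in general; cofinality of the filtrations $\{\bar J^{\,n}\}$ and $\{\bar I^{\,n}\}$ is an asymptotic statement and cannot pin down the exact colon ideal at each $n$. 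So the ``technical heart'' is genuinely missing, and nothing in your setup supplies it: the existence theorems for weak-(FC)-elements guarantee (FC1) only for the family of ideals specified \emph{in advance}, and a sequence chosen against $(J,\frak n)$ alone carries no information about intersections with the sets $J^{a}I^{r}\frak n^{m}$ that arise when you rewrite powers of $I$.

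The paper avoids this transfer problem entirely by making a better initial choice: it takes $x_1,\ldots,x_p$ to be a maximal weak-(FC)-sequence in $J$ with respect to the \emph{triple} $(J,I,\frak n)$, not the pair $(J,\frak n)$. Then (FC1) controls all products $J^{n_1}I^{n_2}\frak n^{n_3}$, and both specializations are immediate: taking $n_2=0$ gives the $(J,\frak n)$ statement, while writing $I^{n+1}\frak n^m=J^{\,n+1-r}I^{r}\frak n^m$ (valid for $n\ge r$ since $J$ is a reduction of $I$) gives $(x)\cap I^{n+1}\frak n^m=xJ^{\,n-r}I^{r}\frak n^m=xI^{n}\frak n^m$ --- the fixed factor $I^{r}$ that defeats your argument is harmless precisely because $I$ is one of the ideals the sequence was chosen against. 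The price of this choice is that generation of $J$ is no longer an off-the-shelf citation: the paper proves by induction that $(x_1,\ldots,x_p)\cap J^{n+1}I^m=(x_1,\ldots,x_p)J^{n}I^m$ for large $n$ and all $m\ge0$, concludes from maximality that $(x_1,\ldots,x_p)$ is a reduction of both $J$ and $I$, and then gets $p=\ell(I)$ and $J=(x_1,\ldots,x_p)$ from minimality of $J$. If you want to salvage your write-up, replace the pair $(J,\frak n)$ by the triple $(J,I,\frak n)$ at the outset and follow that route; the post-hoc transfer is the wrong place to put the work.
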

\begin{proof} Let $
x_1,\ldots,x_p$ be a maximal weak-(FC)-sequence in $J$ of $(J,I,
\frak n).$ Since $J$ is a
 reduction of $I,$  it easily follows that  $
x_1,\ldots,x_p$ is a weak-(FC)-sequence of $(J,\frak n),$ and $
x_1,\ldots,x_p$ is also a weak-(FC)-sequence of $(I, \frak n).$

By induction on $i \le p,$
 we    prove that  $$(x_1,\ldots, x_{i})
\bigcap J^{n+1}I^m = (x_1,\ldots, x_{i}) J^nI^m$$ for all large
$n$ and $m \ge 0.$ The  case of $i=0$ is trivial. Set $q =
(x_1,\ldots, x_{i-1})$ and $A' =A/q.$  Denote by $x_i'$ the image
of $x_i$ in $A'.$ Now suppose that the result has been proved for
$i-1 \ge 0.$  Since $x_i'$ in $JA'$  is a weak-(FC)-element of
$(JA', IA', \frak nA'),$ $$(x_i')\bigcap J^{n+1}I^mA' = x_i'J^n
I^mA'$$ for all large $n$ and $m \ge 0.$
 So
$[J^{n+1}I^m+q]\bigcap [(x_i) +q] = x_iJ^nI^m + q$ for all large
$n$ and $m \ge 0.$ Hence
$$J^{n+1}I^m\bigcap [(x_i) +q] = x_iJ^nI^m  + q \bigcap J^{n+1}I^m$$
for all large $n$ and $m \ge 0.$ By the inductive assumption, $q
\bigcap J^{n+1}I^m= qJ^{n}I^m$ for all large $n$ and $m \ge 0.$
Hence
$$(x_1,\ldots, x_{i})\bigcap J^{n+1}I^m = (x_1,\ldots, x_{i})J^{n}I^m$$
for all large $n$ and $m \ge 0.$ The induction is complete.
Consequently,
$$(x_1,\ldots, x_{p})\bigcap J^{n+1}I^m = (x_1,\ldots, x_{p})J^{n}I^m$$
for all large $n$ and $m \ge 0.$ Since  $x_1,\ldots, x_p$  is a
maximal weak-(FC)-sequence in $J$ of $(J, I, \frak n),$ $
J^{n+1}I^m \subset (x_1,\ldots, x_{p})$ for all large $n$ and $m
\ge 0.$ Thus
$$J^{n+1}I^m = (x_1,\ldots, x_{p})J^{n}I^m$$ for all large $n$ and
$m \ge 0.$ From this it follows that $(x_1,\ldots, x_{p})$ is a
reduction of both $J$ and $I.$ So $p \ge \ell(I).$ Note that
$\ell(JI) = \ell(I),$ and moreover $p \le \ell(JI)$ by
\cite[Theorem 3.4(iv)]{Vi2}, we get $p = \ell(I).$ Hence
$(x_1,\ldots, x_{p})$ is a minimal reduction of both $J$ and $I.$
So $J= (x_1,\ldots, x_{p}).$ We get the proof of the lemma.
\end{proof}

\vskip.2cm Now we will apply the above results for mixed
multiplicities of ideals. Risler and Teissier in 1973 \cite{Te}
defined  mixed multiplicities of  ideals of dimension $0$  and
interpreted them as the Hilbert-Samuel multiplicity of ideals
generated by general elements. Katz and Verma in 1989 \cite{KV}
started the investigation of mixed multiplicities of ideals of
positive height. The case of arbitrary ideals, Viet in 2000
\cite{Vi} described  mixed multiplicities as the Hilbert-Samuel
multiplicity via (FC)-sequences.  In past years, the relationship
between mixed multiplicities  and the Hilbert-Samuel multiplicity,
and the properties similar to that of the Hilbert-Samuel
multiplicity have attracted much attention (see e.g. \cite{CP, DV,
HHRT,SH,KV, KR1, KR2, KT, MV,  Ro, Sw, Ve, Vi4, Vi2, Vi3,VD1, VDT,
VM, VT0, VT1}).

\vskip.2cmLet  $(A, \frak n)$  be  a  Noetherian   local ring with
the maximal ideal $\frak{n}.$ Let $I_1,\ldots,I_s$ be ideals of
$A$ such that  $I= I_1\cdots I_s$ is non-nilpotent.  Set $\dim
A/0_A:I^\infty = q.$ Let $J$ be a $\frak n$-primary ideal. Put
 ${\bf 1} = (1,\ldots,
1) \in \mathbb{N}^s;$ $\mathrm{\bf k}!= k_1!\cdots k_s!;$\;
$|\mathrm{\bf k}| = k_1+\cdots+k_s;$ $\mathrm{\bf n}^\mathrm{\bf
k}= n_1^{k_1}\cdots n_s^{k_s}$ for each $\mathrm{\bf
n}=(n_1,\ldots,n_s); \mathrm{\bf k}=(k_1,\ldots,k_s)\in
\mathbb{N}^s$ and $\mathrm{\bf n} \ge {\bf 1}.$ Moreover, set
$$\mathrm{\bf I}= I_1,\ldots,I_s;\;\mathrm{\bf I}^{[\mathrm{\bf
k}]}=I_1^{[k_1]},
 \ldots,I_s^{[k_s]};\;
 \mathrm{\bf I}^{\bf n}= I_1^{n_1}\cdots I_s^{n_s}.$$
 Remember  that by
\cite[Proposition 3.1]{Vi},
 $\ell_A\bigg[\dfrac{J^{n_0}\mathrm{\bf I}^{\bf n}}{J^{n_0+1}\mathrm{\bf I}^{\bf n}}\bigg]$
is a polynomial of degree $(q-1)$ for all large $n_0,{\bf n}.$ The
terms of total degree $(q-1)$ in this polynomial have the form $$
\sum_{k_0\:+\mid\mathrm{\bf k}\mid\;=\;q-1}e(J^{[k_0+1]},
\mathrm{\bf I}^{[\mathrm{\bf k}]}; A)\dfrac{n_0^{k_0} \mathrm{\bf
n}^\mathrm{\bf k}}{k_0!\mathrm{\bf k}!}.$$ Then recall that
$e(J^{[k_0+1]},\mathrm{\bf I}^{[\mathrm{\bf k}]}; A)$ is called
the {\it  mixed multiplicity of $J,\mathrm{\bf I}$ of the type
$(k_0, \mathrm{\bf k})$}(see e,g.\cite {HHRT, Ve}).

\vskip.2cm For  statements of the next result, we would like to
recall the following fact.

\begin{remark} \label{rm36a} Let $I_1,\ldots, I_s$ be
   ideals in the graded ring $S$ with
    $\mathrm{ht}(I_1\cdots I_s)  = h >0.$ Let $k_0, \mathrm{\bf k}$
    be
   non-negative integers such that $k_0+ |\mathrm{\bf k}| = \dim S -1$ and
   $ |\mathrm{\bf k}| < h.$
   Then since $t= |\mathrm{\bf k}| < h,$
by \cite[Theorem 3.5 (ii)]{Vi} or \cite[Proposition 3.1
(vii)]{Vi2} (see e.g. \cite{DMT, DV, VD1, VT}), there exists a
weak-(FC)-sequence $x_1, \ldots, x_t$  of $(\mathrm{\bf I}, \frak
m)$ of the type $(\mathrm{\bf k}, 0),$ and in this case, $\dim
S/(x_1, \ldots, x_t) = d-t$ and
$$e(\frak m^{[k_0+1]}, \mathrm{\bf I}^{[\mathrm{\bf k}]}; S) =
e(S/(x_1, \ldots, x_t)).$$ Since $\dim S/(x_1, \ldots, x_t) =
d-t,$   $x_1, \ldots,x_t$ is a subsystem of parameters of $S.$
\end{remark}

Then as an application of Theorem \ref{th1.1v} and known results
in Remark \ref{rm36a}  on mixed multiplicities, we get the
following theorem.

\begin{theorem}\label{th1.3} Let $\mathrm{\bf I}=I_1,\ldots, I_s$ be
   ideals of $S$ with
    $\mathrm{ht}(I_1\cdots I_s)  = h >0.$ Let $k_0, \mathrm{\bf k}$
    be
   non-negative integers such that $k_0+ |\mathrm{\bf k}| = \dim S -1$ and
   $ |\mathrm{\bf k}| < h.$ Let $x_1, \ldots, x_t$ be
 a weak-$(FC)$-sequence   of $(\mathrm{\bf I},
\frak m)$ of the type $(\mathrm{\bf k}, 0).$
  Assume that $\mathrm{in}
x_1,\ldots,\mathrm{in} x_t$ is a subsystem of parameters for $S.$
 Then we have
 $$e(\frak m^{[k_0+1]}, \mathrm{\bf I}^{[\mathrm{\bf k}]}; S)
= o(x_1)\cdots o(x_t)e(S).$$
\end{theorem}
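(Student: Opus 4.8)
The plan is to use Remark \ref{rm36a} to replace the mixed multiplicity by the Hilbert--Samuel multiplicity of a quotient ring, and then to compute that multiplicity by completing $x_1, \ldots, x_t$ to a full system of parameters with generic linear forms and invoking Theorem \ref{th1.1v}. By Remark \ref{rm36a} the hypotheses give $\dim S/(x_1, \ldots, x_t) = d - t$ (so $x_1, \ldots, x_t$ is a subsystem of parameters) together with
$$e(\frak m^{[k_0+1]}, \mathrm{\bf I}^{[\mathrm{\bf k}]}; S) = e(S/(x_1, \ldots, x_t)).$$
Since $k$ is infinite, I would choose linear forms $\ell_{t+1}, \ldots, \ell_d \in S_1$ generically so that simultaneously $x_1, \ldots, x_t, \ell_{t+1}, \ldots, \ell_d$ is a system of parameters of $S$; $\mathrm{in}x_1, \ldots, \mathrm{in}x_t, \ell_{t+1}, \ldots, \ell_d$ is a system of parameters of $S$ (possible precisely because $\mathrm{in}x_1, \ldots, \mathrm{in}x_t$ is assumed to be a subsystem of parameters); and the images $\bar\ell_{t+1}, \ldots, \bar\ell_d$ generate a minimal reduction of the maximal ideal of $B = S/(x_1, \ldots, x_t)$. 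As each $\ell_j$ is homogeneous of degree one, $\mathrm{in}\ell_j = \ell_j$ and $o(\ell_j) = 1$, so applying Theorem \ref{th1.1v} to the system of parameters $x_1, \ldots, x_t, \ell_{t+1}, \ldots, \ell_d$ yields
$$e(x_1, \ldots, x_t, \ell_{t+1}, \ldots, \ell_d; S) = o(x_1) \cdots o(x_t) e(S).$$

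It then remains to identify $e(S/(x_1, \ldots, x_t))$ with the left-hand side above. Here I would pass through the Serre multiplicity symbol and peel off $x_1, \ldots, x_t$ successively, using the recursion $e(y_1, \ldots, y_r; M) = e(y_1, \ldots, y_{r-1}; M/y_r M) - e(y_1, \ldots, y_{r-1}; 0 :_M y_r)$. Writing $S_j = S/(x_1, \ldots, x_j)$ and $I = I_1 \cdots I_s$, at the $j$-th step the element $x_j$ is an $I$-filter-regular element modulo $(x_1, \ldots, x_{j-1})$ by condition (FC2) of Definition \ref{de1.2}, so $0 :_{S_{j-1}} x_j \subseteq 0 :_{S_{j-1}} I^\infty$; this module is $I$-torsion, whence $\dim(0 :_{S_{j-1}} x_j) \le \dim S/I \le d - h$. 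Since $j \le t \le h - 1 < h$, this is strictly less than $d - j$, the number of elements remaining after $x_j$ is removed, so every correction term vanishes and the peeling gives
$$e(x_1, \ldots, x_t, \ell_{t+1}, \ldots, \ell_d; S) = e(\ell_{t+1}, \ldots, \ell_d; S/(x_1, \ldots, x_t)).$$
Because $\bar\ell_{t+1}, \ldots, \bar\ell_d$ generate a minimal reduction of the maximal ideal of $B$, the right-hand side equals $e(\frak m B; B) = e(S/(x_1, \ldots, x_t))$ by \cite{NR}. Combining the three displays gives $e(\frak m^{[k_0+1]}, \mathrm{\bf I}^{[\mathrm{\bf k}]}; S) = o(x_1) \cdots o(x_t) e(S)$, as required.

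The main obstacle is the peeling step: one must be sure that reducing the parameter multiplicity $e(x_1, \ldots, x_t, \ell_{t+1}, \ldots, \ell_d; S)$ to the multiplicity of the quotient ring $S/(x_1, \ldots, x_t)$ introduces no spurious lower-dimensional contributions. This is exactly where the hypothesis $|\mathrm{\bf k}| = t < h = \mathrm{ht}(I_1\cdots I_s)$ is indispensable: it forces $\dim(0 :_{S_{j-1}} I^\infty) \le d - h$ to stay below $d - j$ at every stage, so that the filter-regularity built into the weak-(FC)-sequence makes all Serre-symbol correction terms vanish. One should also check that the generic linear forms can be chosen to meet all three conditions at once, which is a routine prime-avoidance argument over the infinite field $k$.
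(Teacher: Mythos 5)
Your proposal is correct and takes essentially the same route as the paper: Remark \ref{rm36a} to convert the mixed multiplicity into $e(S/(x_1,\ldots,x_t))$, completion of $x_1,\ldots,x_t$ by degree-one elements chosen so that they give minimal reductions of the maximal ideal in both $S/(x_1,\ldots,x_t)$ and $S/(\mathrm{in}x_1,\ldots,\mathrm{in}x_t)$, an application of Theorem \ref{th1.1v}, and finally the identification $e(x_1,\ldots,x_t,x_{t+1},\ldots,x_d;S)=e(S/(x_1,\ldots,x_t))$ via \cite{NR}. The only difference is cosmetic: where you spell out the multiplicity-symbol peeling argument with correction terms vanishing by filter-regularity and $t<h$, the paper cites \cite{AB} for exactly that step.
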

\begin{proof} Since $t= |\mathrm{\bf k}| < h$ and $x_1, \ldots, x_t$
is
 a weak-$(FC)$-sequence   of $(\mathrm{\bf I},
\frak m)$ of the type $(\mathrm{\bf k}, 0),$ by Remark
\ref{rm36a}, we get $\dim S/(x_1, \ldots, x_t) = d-t$ and
$$e(\frak m^{[k_0+1]}, \mathrm{\bf I}^{[\mathrm{\bf k}]}; S) =
e(S/(x_1, \ldots, x_t)).$$   Now since $\mathrm{in}
x_1,\ldots,\mathrm{in} x_t$ is also a subsystem of parameters for
$S,$ $$\dim S/(\mathrm{in}x_1, \ldots, \mathrm{in}x_t) = d-t.$$
Hence there exist homogeneous elements of degree $1$:
$x_{t+1},\ldots, x_d \in \frak m $  such that $$(x_{t+1},\ldots,
x_d)(S/(x_1, \ldots, x_t)) $$ and $(x_{t+1},\ldots,
x_d)(S/(\mathrm{in}x_1, \ldots, \mathrm{in}x_t)) $ are  minimal
reductions of $\frak m(S/(x_1, \ldots, x_t))$ and $\frak
m(S/(\mathrm{in}x_1, \ldots, \mathrm{in}x_t)),$ respectively by
\cite{NR}. Then $$x_1, \ldots,x_t, x_{t+1},\ldots, x_d$$ and
$\mathrm{in}x_1, \ldots,\mathrm{in}x_t, \mathrm{in}x_{t+1},\ldots,
\mathrm{in}x_d$ are  systems of parameters of $S.$ Hence by
Theorem \ref{th1.1v}, we have \begin{align*}e(x_1, \ldots,x_t,
x_{t+1},\ldots, x_d; S) &= \deg x_{t+1}\cdots \deg x_do(x_1)
\cdots o(x_t)e(S)\\&= o(x_1) \cdots o(x_t) e(S).\end{align*}
 Recall
that $x_1, \ldots,x_t$ is a weak-(FC)-sequence, $x_1, \ldots,x_t$
is an $I$-filter-regular sequence and $t < \mathrm{ht}I,$  here
$I=I_1\cdots I_s.$ Hence it follows  by \cite{AB} that
 $$e(x_1, \ldots,x_t, x_{t+1},\ldots,
x_d; S) = e(x_{t+1},\dots,x_d; S/(x_1,\ldots,x_{t})).$$ Since
$e(x_{t+1},\dots,x_d; S/(x_1,\ldots,x_{t})) = e(\frak m;
S/(x_1,\ldots,x_{t})) = e( S/(x_1,\ldots,x_{t}))$ by \cite[Theorem
1]{NR}, we get $e( S/(x_1,\ldots,x_{t}))= e(x_1, \ldots,x_t,
x_{t+1},\ldots, x_d; S).$ Thus $$e(\frak m^{[k_0+1]}, \mathrm{\bf
I}^{[\mathrm{\bf k}]}; S) = o(x_1)\cdots o(x_t)e(S).$$
 \end{proof}

 Denote by $ R(I) =
  \bigoplus_{n\ge  0}I^n$
  the Rees algebra of an ideal $I.$ Then as an application of  Theorem
  \ref{th1.3}, we obtain the following.

\begin{corollary}\label{th3.6a} Let $I$ be an  equimultiple
ideal  of $\mathrm{ht}I =h > 0.$  Set $\dim S =d.$ Let  $x_1,
\ldots, x_h$ be a weak-$(FC)$-sequence in $I$ of $(I, \frak m).$
Assume that $\mathrm{in} x_1,\ldots,\mathrm{in} x_h$ is a
subsystem of parameters for $S.$
 Then  we have
\begin{enumerate}[\rm (i)]
 \item $ e({\frak m}^{[d-i]},I^{[i]}; S) = o(x_1) \cdots o(x_i)e(S)$  for all $ i \le
 h-1.$
 \item $e(R(I)) =
(1+\sum_{i=1}^{h-1}o(x_1) \cdots o(x_i))e(S).$
\end{enumerate}
\end{corollary}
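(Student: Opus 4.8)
The plan is to deduce both parts of Corollary \ref{th3.6a} from Theorem \ref{th1.3} together with a known formula expressing $e(R(I))$ as a sum of mixed multiplicities. First I would observe that since $I$ is equimultiple with $\mathrm{ht}\,I = h$, we have $\ell(I) = h$, so a weak-$(FC)$-sequence in $I$ of $(I,\frak m)$ has length exactly $h$, and for each $i \le h-1$ the truncation $x_1,\ldots,x_i$ is a weak-$(FC)$-sequence of $(\mathrm{\bf I},\frak m)$ of the type $(\mathrm{\bf k},0)$ with $\mathrm{\bf I} = I$ and $|\mathrm{\bf k}| = i < h$. Applying Theorem \ref{th1.3} directly to this truncated sequence (noting that $\mathrm{in}\,x_1,\ldots,\mathrm{in}\,x_i$ remains a subsystem of parameters, being an initial segment of $\mathrm{in}\,x_1,\ldots,\mathrm{in}\,x_h$) yields immediately
$$e(\frak m^{[d-i]}, I^{[i]}; S) = o(x_1)\cdots o(x_i)\,e(S)$$
for all $i \le h-1$, which is part (i). The main content is simply checking that the hypotheses of Theorem \ref{th1.3} are met by each initial segment; I expect this to be routine once the equimultiplicity is used to pin down $\ell(I) = h$.

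\textbf{Assembling the Rees algebra multiplicity.} For part (ii) I would invoke the standard fact that, for an equimultiple ideal of height $h$ in a graded ring of dimension $d$, the multiplicity of the Rees algebra decomposes as a sum of mixed multiplicities of $(\frak m, I)$. Specifically, the formula
$$e(R(I)) = \sum_{i=0}^{h-1} e(\frak m^{[d-i]}, I^{[i]}; S)$$
is the shape one expects (this is precisely the identity underlying Theorem \ref{th0.2} and the results of \cite{ViS, Tr}); the $i=0$ term is $e(\frak m^{[d]};S) = e(S)$. Substituting the values from part (i) into this sum gives
$$e(R(I)) = e(S) + \sum_{i=1}^{h-1} o(x_1)\cdots o(x_i)\,e(S) = \Big(1 + \sum_{i=1}^{h-1} o(x_1)\cdots o(x_i)\Big)e(S),$$
which is exactly (ii).

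\textbf{The main obstacle.} The genuinely delicate point is justifying the sum formula for $e(R(I))$ in the equimultiple setting and confirming that its terms are indexed precisely by $i = 0,\ldots,h-1$ (i.e. that the mixed multiplicities of type $(i,d-i)$ vanish or do not contribute for $i \ge h$, which is where equimultiplicity $\ell(I) = h$ is essential). I would either cite the established reduction of $e(R(I))$ to mixed multiplicities from the literature on blow-up rings, or reprove it by relating $\dim R(I) = d+1$ and the Hilbert function of the bigraded algebra $\bigoplus \frak m^a I^b / \frak m^{a+1} I^b$ to the mixed multiplicities $e(\frak m^{[d-i]}, I^{[i]}; S)$; the analytic spread condition guarantees that only the first $h$ of these are relevant. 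Once this indexing is secured, the remaining steps are purely formal substitutions, so the weight of the proof rests on correctly handling the range of summation and the vanishing of the higher mixed multiplicities.
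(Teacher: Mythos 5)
Your proposal is correct and follows essentially the same route as the paper: part (i) is obtained exactly as in the paper by applying Theorem \ref{th1.3} to each initial segment $x_1,\ldots,x_i$ (a weak-$(FC)$-sequence of type $(i,0)$ whose initial forms remain a subsystem of parameters), and part (ii) by combining (i) with the known decomposition of $e(R(I))$ into mixed multiplicities together with the vanishing $e(\frak m^{[d-i]},I^{[i]};S)=0$ for $i\ge h$ forced by equimultiplicity. The "main obstacle" you flag is handled in the paper simply by citing Verma's results in \cite{V1} (see also \cite{Vi4, VD}) for both the vanishing and the sum formula, so no reproof is needed.
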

\begin{proof} Since  $x_1, \ldots, x_i$ is a weak-$(FC)$-sequence in $I$ of $(I,
\frak m)$ and $\mathrm{in} x_1,\ldots,\mathrm{in} x_i$ is a
subsystem of parameters for $S$ for all $i \le h-1.$
 By Theorem \ref {th1.3}, we get $$ e({\frak
m}^{[d-i]},I^{[i]}; S) = o(x_1) \cdots o(x_i)e(S)$$  for all $ i
\le
 h-1.$ We have (i).  Since $I$ is an equimultiple ideal  of
 ht$I= h,$  for any $i \ge h,$ we have $ e({\frak m}^{[d-i]},I^{[i]}; S) = 0$
 by \cite{V1} (see e.g. \cite{Vi4, VD}). So by (i) and \cite[Theorem 3.1]{V1}  (see e.g.   \cite [Theorem
3.2] {Vi4} or  \cite [Corollary 4.7 (ii)]{VD}), we get (ii).
\end{proof}

\section{Mixed Multiplicities of Homogeneous Ideals}

As applications of Section 2, in this section, we  determine
formulas for  mixed multiplicities and multiplicities
 of Rees rings  of homogeneous  equimultiple
ideals in $S.$

\vskip.2cm Let $I$ be a homogeneous ideal of $S.$  An ideal $J$ is
called a {\it homogeneous minimal reduction } of $I$ if $J$ is a
minimal reduction of $I$ and $J$ is homogeneous.
 $I$ is called a {\it homogeneous  equimultiple ideal} if there
exists a homogeneous minimal reduction $J$ of $I$ generated by
$\mathrm{ht}I$ homogeneous elements.

\vskip.2cm
 Now, we  investigate the relationship between the degree sequence of  minimal
 reductions of
homogeneous equimultiple ideals and the degree
 sequence of weak-(FC)-sequences  that
will be used in the proof for  Corollary  \ref {th3.6} of this
section.

\begin{lemma} \label{lm1.6} Let $I$ be a homogeneous  equimultiple
ideal  of $\mathrm{ht}I =h > 0.$   Let $J$ be a homogeneous
minimal reduction of $I$  with the degree sequence
$(c_1,\ldots,c_{h}).$
 Then $J$ is generated by  a
weak-$(FC)$-sequence  $ x_1,\ldots,x_{h}$ in $I$ of $(I, \frak m)$
such that $\mathrm{in} x_1,\ldots,\mathrm{in} x_h$ is a subsystem
of parameters for $S$ and
 $$(o(x_1),\ldots,o(x_h))= (c_1,\ldots,c_{h}).$$
  \end{lemma}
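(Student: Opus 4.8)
The plan is to realize the required sequence as a ``staircase'' of general combinations of a fixed homogeneous minimal basis of $J$: the triangular shape forces the degree sequence, while generality supplies the weak-(FC)-property. First I would fix a homogeneous minimal basis $w_1,\ldots,w_h$ of $J$ with $\deg w_i=c_i$ and $c_1\le\cdots\le c_h$; this exists since $I$ is equimultiple, so $\mu(J)=\ell(I)=\mathrm{ht}I=h$, and as each $w_i$ is homogeneous, $\mathrm{in}w_i=w_i$ and $o(w_i)=c_i$. Then, for $i=1,\ldots,h$, I would set $x_i=\sum_{j\ge i}a_{ij}w_j$ with general scalars $a_{ij}\in k$ and $a_{ii}\neq0$. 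The associated change of basis is unipotent upper-triangular, hence invertible, so $x_1,\ldots,x_h$ is again a minimal basis of $J$; and since $x_i$ involves only generators $w_j$ with $c_j\ge c_i$ while its degree-$c_i$ part is nonzero, one reads off $o(x_i)=c_i$ at once, giving $(o(x_1),\ldots,o(x_h))=(c_1,\ldots,c_h)$.

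For the initial forms, $\mathrm{in}x_i=\sum_{j\ge i,\,c_j=c_i}a_{ij}w_j$ is the homogeneous degree-$c_i$ part of $x_i$; it contains $a_{ii}w_i$ with $a_{ii}\neq0$, whereas $\mathrm{in}x_j$ for $j>i$ involves only $w_l$ with $l\ge j>i$. Hence no $\mathrm{in}x_i$ is a scalar multiple of another, i.e. $a\,\mathrm{in}x_i\neq b\,\mathrm{in}x_j$ for all $0\neq a,b\in k$ and $i\neq j$. By Remark \ref{rm1.2u} this forces $\mathrm{in}x_1,\ldots,\mathrm{in}x_h$ to be a minimal basis of $\mathrm{in}J$, and $\mathrm{in}J=J$ by Remark \ref{rm1.2s} because $J$ is homogeneous. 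Thus $\mathrm{in}x_1,\ldots,\mathrm{in}x_h$ generate $J$; since $J$ is a reduction of the equimultiple ideal $I$ we have $\mathrm{ht}J=h$ with $\dim S/J=d-h$, so these $h$ homogeneous generators form a subsystem of parameters of $S$.

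The main point, and the only step I expect to be delicate, is that $x_1,\ldots,x_h$ is a weak-(FC)-sequence in $I$ of $(I,\frak m)$, because forcing the degree sequence through the triangular form could a priori destroy the generality that weak-(FC)-elements demand. The observation that saves the construction is that the restriction costs nothing in the quotient: modulo $(x_1,\ldots,x_{i-1})$ the relations $x_1\equiv\cdots\equiv x_{i-1}\equiv0$ solve, by back-substitution up the staircase, for $\bar w_1,\ldots,\bar w_{i-1}$ in terms of $\bar w_i,\ldots,\bar w_h$, so $\bar w_i,\ldots,\bar w_h$ already generate $\bar J=J/(x_1,\ldots,x_{i-1})$. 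Consequently $\bar x_i=\sum_{j\ge i}a_{ij}\bar w_j$ is, for general $a_{ij}$, a general element of the full ideal $\bar J$ rather than of a proper subideal; as $\bar J$ is a reduction of $\bar I$, it is a weak-(FC)-element of $(\bar I,\bar{\frak m})$ by the general-element construction of weak-(FC)-sequences (see \cite{Vi}). Each of these is a nonempty Zariski-open condition on the $a_{ij}$, so, $k$ being infinite, I can choose all the coefficients generically and inductively so that every step succeeds; the resulting $x_1,\ldots,x_h$ then satisfies all three requirements simultaneously.
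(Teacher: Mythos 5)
Your proof is correct, but it inverts the logic of the paper's argument. The paper begins with Lemma \ref{lm1.4}, which produces \emph{some} weak-(FC)-sequence generating $J$, then normalizes it by elementary transformations into a staircase over a homogeneous minimal basis $y_1,\ldots,y_h$ indexed by an \emph{unknown} permutation $y_{n_1},\ldots,y_{n_h}$, and then spends most of the proof showing, via the (FC1) condition in the successive quotients $S/(x_1,\ldots,x_i)$ and a careful computation of orders (culminating in $o(\bar J_i)=\deg\bar y_{n_{i+1}}$), that this permutation is forced to be degree-increasing; there the weak-(FC) property is given and the degree sequence is earned. You do the opposite: you impose the degree-increasing staircase a priori on the ordered basis $w_1,\ldots,w_h$, so that $o(x_i)=c_i$ and the subsystem-of-parameters claim are immediate (your use of Remarks \ref{rm1.2u} and \ref{rm1.2s} here matches the paper's own step), and you earn the weak-(FC) property by genericity. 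This genuinely shortcuts the hardest part of the paper's proof, but note what it requires: a statement strictly stronger than Lemma \ref{lm1.4} as stated, namely that at each step a \emph{general} element of the image $\bar J$ of $J$ in $S/(x_1,\ldots,x_{i-1})$ --- a general $k$-combination of any generating set, hence, by your correct back-substitution remark, of $\bar w_i,\ldots,\bar w_h$ --- is a weak-(FC)-element of $(\bar I,\bar{\frak m})$. That statement is true, but it is not literally what \cite{Vi} states; it has to be assembled exactly as in the proof of Lemma \ref{lm1.4}: Rees' lemma \cite{Re} shows that the elements of $\bar J$ failing (FC1) for the triple $(\bar J,\bar I,\bar{\frak m})$, or failing filter-regularity, have image in a finite union of proper subspaces of $\bar J/\bar{\frak m}\bar J$ (so generic coefficients avoid them, the linear map $k^{h-i+1}\to\bar J/\bar{\frak m}\bar J$ being surjective), and then the reduction property $\bar I^{r+1}=\bar J\bar I^{r}$ together with $\sqrt{\bar J\bar I\bar{\frak m}}=\sqrt{\bar I\bar{\frak m}}$ transfers (FC1) and (FC2) from the triple to $(\bar I,\bar{\frak m})$. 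If you make those two steps explicit, your argument is a complete and arguably cleaner proof; what you give up is the paper's stronger by-product, namely that the (FC1) condition by itself forces the degrees of any such triangularized generating weak-(FC)-sequence into increasing order.
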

\begin{proof} By Proposition \ref {lm1.01},
we can assume that $y_1, \ldots, y_h$ is a homogeneous minimal
basis of $J$ such that $y^*_1, \ldots, y^*_h$ is a minimal basis
of $J^*,$ and the degree sequences of $J$ and $J^*$ are $(\deg
y_1,\ldots, \deg y_h) = (c_1,\ldots,c_{h}).$
 By Lemma \ref{lm1.4}, $J$  is generated by a weak-(FC)-sequence
$x_1,\ldots, x_{h}$ in $J$ of $(J, \frak m)$ and this sequence is
also a weak-(FC)-sequence  in $I$ of $(I, \frak m).$ Then notice
that the sequence $x_1,\ldots,x_i, (x_{i+1}-\sum_{j=1}^i
a_jx_j),\ldots, x_{h}$ and the sequence $u_1x_1,\ldots,
u_{h}x_{h}$
 are also  weak-(FC)-sequences in $J$ of $(J, \frak
m)$ and $(u_1x_1,\ldots, u_{i}x_{i})=(x_1,\ldots, x_{i})$ for all
$ 0 \ne u_1,\ldots,u_{h} \in k;$ $a_1,\ldots,a_i \in k$ and $i \le
h.$ Hence
  without loss of generality, we can assume
that   $x_1, \ldots, x_{h}$ are written in the following forms
$$ x_j=
y_{n_j}+a_{j(j+1)}y_{n_{j+1}}+\cdots+
    a_{jh}y_{n_{h}},$$
 here $a_{ij} \in k$,  $y_{n_1}, \ldots, y_{n_h}$ is a permutation
of $y_1, \ldots, y_h,$ moreover $o(x_j) = \deg y_{n_j};$ $x_j^* =
y^*_{n_j}$ for all $j = 1, \ldots, h.$  Set $U_i
=(x_{1},\ldots,x_{i})$ for all $i \le h.$ Since
$x^*_{1},\ldots,x^*_{i}$ are pairwise
 distinct, it follows that
$x^*_{1},\ldots,x^*_{i}$ and $\mathrm{in} x_1,\ldots,\mathrm{in}
x_i$ are minimal bases of $U^*_i$ and $\mathrm{in}U_i,$
respectively by Remark \ref{rm1.2u}.
 In this case,
$\mathrm{in} x_1,\ldots,\mathrm{in} x_h$ is a minimal basis of
$\mathrm{in}U_h.$  Because  $U_h = J$ is a homogeneous ideal  of
$\mathrm{ht}I =h > 0,$ $J =(\mathrm{in} x_1,\ldots,\mathrm{in}
x_h).$ So $\mathrm{in} x_1,\ldots,\mathrm{in} x_h$ is a subsystem
of parameters for $S.$

The following we will prove that $(o(x_1),\ldots,o(x_h))=
(c_1,\ldots,c_{h}).$

 Set $J_i =
(y_{n_1},\ldots,y_{n_i});$ $J_0 = U_0 = \{0\};$
     $S'_i = S/U_i;$ $S_i = S/J_i$ for all $0\le i \le h-1.$ Denote by
     $x'; \bar x,$  $J_i'; {\bar
J}_i$ and $\frak m_i'; \bar{\frak m}_i$ the images of $ x \in S,$
$ J$ and $\frak m$ in $S'_i$ and $S_i,$ respectively.  Since
$y^*_1, \ldots, y^*_h$ is a minimal basis of $J^*,$ it follows
that $y^*_{n_1},\ldots,y^*_{n_i}$ is a minimal basis of $J_i^*.$
So $J_i^* =(y^*_{n_1},\ldots,y^*_{n_i}).$ Note that $ x'_{{i+1}}$
is a weak-(FC)-element in $J_i'$ of $(J_i', \frak m_i'),$ we have
$$(x'_{{i+1}})\bigcap \frak m_i'^mJ_i'^{n} = x'_{{i+1}}\frak
m_i'^mJ_i'^{n-1}$$ for all large $m, n.$
 Since $x_{{i+1}}\frak m^{m -o(x_{{i+1}})} \subset
\frak m^m$ for all $m \ge o(x_{i+1}),$ it follows that
$x'_{{i+1}}\frak m_i'^{m -o(x_{{i+1}})}J_i'^n \subset \frak
m_i'^mJ_i'^n $ for all $m \ge o(x_{i+1}).$  Consequently, we
obtain
$$x'_{{i+1}}\frak m_i'^{m -o(x_{{i+1}})}J_i'^n \subset (x'_{{i+1}})
\bigcap \frak m_i'^mJ_i'^{n} = x'_{{i+1}}\frak m_i'^mJ_i'^{n-1}$$
for all large $m, n.$ Hence $x_{{i+1}}\frak m^{m -o(x_{{i+1}})}J^n
+ U_i \subset x_{{i+1}}\frak m^mJ^{n-1} + U_i$ for all large $m,
n.$ Therefore  we get \begin{equation}\label{eq3}o((x_{{i+1}}\frak
m^{m -o(x_{{i+1}})}J^n + U_i)^*/  U^*_i) \ge o((x_{{i+1}}\frak
m^mJ^{n-1} + U_i)^*/U^*_i)\end{equation} for all large $m, n.$
Emphasize that $U_i^* = (x^*_1,\ldots,x^*_i) =
(y^*_{n_1},\ldots,y^*_{n_i}) = J_i^*$ for all $1 \le i \le h-1;$
$U_0^* = J_0^* = \{0\}$ and $x_i^* = y^*_{n_i}$ for all $i = 1,
\ldots, h,$ first of all we have
\begin{equation}\label{eq4}(x_{{i+1}}\frak m^mJ^{n-1} +
U_i)^*/U_i^*= (y_{n_{i+1}}\frak m^mJ^{n-1} + J_i)^*/
J_i^*.\end{equation} Next, we need to show that
\begin{equation}\label{eq5} o\big(y_{n_{i+1}}\frak m^m J^{n-1}+ J_i/ J_i\big)=
o\big((y_{n_{i+1}}\frak m^mJ^{n-1} + J_i)^*/
J_i^*\big).\end{equation} Indeed, the  case of $i=0$ is true by
Proposition \ref {lm1.01} (iii). For $1\le i \le h-1,$
 since
$y_{n_1},\ldots,y_{n_i}$ is a part of a minimal basis of $J$  and
$$(y_{n_1},\ldots,y_{n_i})= J_i \subset y_{n_{i+1}}\frak m^mJ^{n-1}
+ J_i \subset J,$$ it follows that $y_{n_1},\ldots,y_{n_i}$ is a
part of a minimal basis of $y_{n_{i+1}}\frak m^mJ^{n-1} + J_i.$
Moreover $i < \mathrm{ht} J$ and $y^*_{n_1},\ldots,y^*_{n_i}$ are
pairwise distinct,
 there exist $j$ elements for a
certain integer $j
> 0$
$$f_1,\ldots, f_j \in y_{n_{i+1}}\frak m^m J^{n-1}+
J_i$$ such that $f_1,\ldots, f_j,y_{n_1},\ldots,y_{n_i}$ is a
minimal basis of $y_{n_{i+1}}\frak m^mJ^{n-1} + J_i$ and
$$f^*_1,\ldots, f^*_j,y^*_{n_1},\ldots,y^*_{n_i}$$
are pairwise distinct (see  the proof of Proposition \ref
{lm1.01}). Therefore $f^*_1,\ldots,
f^*_j,y^*_{n_1},\ldots,y^*_{n_i}$ is a minimal basis of
$(y_{n_{i+1}}\frak m^m J^{n-1}+ J_i)^*$  by Remark \ref{rm1.2u}.
 Then it is easily seen that
$$o\big(y_{n_{i+1}}\frak m^m J^{n-1}+
J_i/ J_i\big) = o(\{f_1,\ldots, f_j\})$$ and
$o\big((y_{n_{i+1}}\frak m^mJ^{n-1} + J_i)^*/ J_i^*\big) =
o(\{f^*_1,\ldots, f^*_j\}).$  Consequently  we have (\ref{eq5}).

 Note that
$y_{n_{i+1}}\frak m^m J^{n-1}+ J_i/ J_i = \bar
y_{n_{i+1}}\bar{\frak m}_i^m{\bar J}_i^{n-1},$ hence we obtain
$$o\big((x_{{i+1}}\frak m^m J^{n-1}+ U_i)^*/U_i^*\big)= o( \bar
y_{n_{i+1}}\bar{\frak m}_i^m{\bar J}_i^{n-1})$$ by (\ref{eq4}) and
(\ref{eq5}).
 And likewise,
$$o\big((x_{i+1}\frak m^{m-o(x_{i+1})}J^n + U_i)^*/U_i^*\big) =
o(\bar y_{n_{i+1}}\bar{\frak m}_i^{m -o(x_{i+1})}{\bar J}_i^n).$$
Consequently, $$o(\bar y_{n_{i+1}}\bar{\frak m}_i^{m
-o(x_{i+1})}{\bar J}_i^n) \ge o(\bar y_{n_{i+1}}\bar{\frak
m}_i^m{\bar J}_i^{n-1})$$ for all large $m, n$ by (\ref{eq3}).
Recall that $o(x_i) = \deg y_{n_i}$ for all $1 \le i \le h.$
Moreover, we have $\deg y_{n_j} = \deg \bar y_{n_j}$ for all $i+1
\le j \le h.$ Since $I$ is a homogeneous equimultiple ideal,
$y_1,\ldots, y_{h}$ is a subsystem of homogeneous parameters. From
this it follows that $\bar J_i$ is generated by a subsystem of
homogeneous parameters in $S_i,$ and $\bar y_{n_{i+1}} \in \bar
J_i$ is an element of homogeneous parameters.
 Because  a subsystem of
parameters for $S_i$ is algebraically independent over $k$ (see
e.g. \cite [Corollary 11.21]{A}),
  we get  $$o(\bar y_{n_{i+1}}\bar{\frak m}_i^m {\bar J}_i^n) = \deg\bar
y_{n_{i+1}}+ m + n.o({\bar J}_i)$$ for all $m, n
> 0.$ Therefore it can easily be seen that
$$m+n.o({\bar J}_i) = o(\bar y_{n_{i+1}}\bar{\frak m}_i^{m
-o(x_{i+1})}{\bar J}_i^n) \ge o(\bar y_{n_{i+1}}\bar{\frak
m}_i^m{\bar J_i}^{n-1})= m+\deg\bar y_{n_{i+1}}+(n-1)o({\bar
J}_i)$$ for all large $m, n.$ So $ o(\bar J_i) \ge \deg\bar
y_{n_{i+1}}.$ Note that we always have $ o(\bar J_i) \le \deg\bar
y_{n_{i+1}},$ consequently
 $ o(\bar J_i) = \deg\bar
y_{n_{i+1}}.$ Remember that $$\bar J_i= J/J_i= (\bar
y_{n_{i+1}},\ldots, \bar y_{n_h}) $$ and $\deg\bar y_{n_{i+1}} =
\deg y_{n_{i+1}}$  for all $0 \le i \le h-1,$ hence
  $o(\{y_{n_{i+1}},\ldots, y_{n_h} \})  = \deg
y_{n_{i+1}}$ for all  $0 \le i \le h-1.$  From this it follows
that $\deg y_{n_1}\leq\cdots \leq\deg y_{n_h}.$
   So $$(\deg y_{n_1},\ldots, \deg y_{n_h})$$ is the degree
   sequence of $J,$ i.e.,  $(\deg y_{n_1},\ldots, \deg y_{n_h})= (c_1,\ldots,c_{h}).$
Then we get $(o(x_1),\ldots,o(x_h))= (c_1,\ldots,c_{h})$ because
$o(x_i) = \deg y_{n_i}$ for all $1 \le i \le h.$
\end{proof}

\vskip.2cm The following theorem  answers to the question (see
Question 1) on determining mixed multiplicities and multiplicities
of Rees rings of homogeneous
 ideals generated by arbitrary subsystems of parameters.

\begin{corollary}\label{th3.6} Let $I$ be a homogeneous  equimultiple
ideal in $S$  of $\mathrm{ht}I =h > 0.$ Let $(a_1, \ldots,a_h)$ be
the degree sequence of a homogeneous minimal reduction of $I.$ Set
$\dim S =d.$ Then we have:
\begin{enumerate}[\rm (i)]
 \item $ e({\frak m}^{[d-i]},I^{[i]} ; S) = a_1 \cdots a_ie(S)$  for all $ i \le
 h-1.$
 \item $e(R(I)) =
(1+\sum_{i=1}^{h-1}a_1 \cdots a_i)e(S).$
\end{enumerate}
\end{corollary}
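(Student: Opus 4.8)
The plan is to reduce this statement directly to Corollary \ref{th3.6a} by feeding it a weak-(FC)-sequence whose $o$-values coincide with the prescribed degree sequence $(a_1,\ldots,a_h)$; essentially all of the content has already been packaged into Lemma \ref{lm1.6}. First I would fix a homogeneous minimal reduction $J$ of $I$ whose degree sequence is $(a_1,\ldots,a_h)$. Such a $J$ is available by hypothesis, since $I$ is a homogeneous equimultiple ideal of $\mathrm{ht}I = h > 0$, so it admits a homogeneous minimal reduction generated by $h$ homogeneous elements, and the degree sequence of a homogeneous minimal reduction is well defined.

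Next I would invoke Lemma \ref{lm1.6}. Since $I$ is homogeneous equimultiple of $\mathrm{ht}I = h > 0$ and $J$ is a homogeneous minimal reduction with degree sequence $(a_1,\ldots,a_h)$, the lemma yields a weak-$(FC)$-sequence $x_1,\ldots,x_h$ in $I$ of $(I,\frak m)$ such that $\mathrm{in}x_1,\ldots,\mathrm{in}x_h$ is a subsystem of parameters for $S$ and $(o(x_1),\ldots,o(x_h)) = (a_1,\ldots,a_h)$. This sequence satisfies precisely the hypotheses demanded by Corollary \ref{th3.6a}: it is a weak-$(FC)$-sequence in $I$ of $(I,\frak m)$, and its initial forms constitute a subsystem of parameters of $S$.

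I would then apply Corollary \ref{th3.6a} to this sequence and substitute $o(x_j) = a_j$. Part (i) of that corollary gives $e(\frak m^{[d-i]}, I^{[i]}; S) = o(x_1)\cdots o(x_i)\, e(S)$ for all $i \le h-1$, and the substitution converts the right-hand side into $a_1\cdots a_i\, e(S)$, which is the claimed formula (i). Part (ii) of that corollary gives $e(R(I)) = (1 + \sum_{i=1}^{h-1} o(x_1)\cdots o(x_i))\, e(S)$, and the same substitution produces $e(R(I)) = (1 + \sum_{i=1}^{h-1} a_1\cdots a_i)\, e(S)$, which is (ii).

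The only genuine obstacle in this argument has already been resolved upstream and does not resurface here: one must know that a weak-$(FC)$-sequence generating $J$ can be chosen so that its $o$-values reproduce the degree sequence of $J$ exactly, rather than merely reproducing some permutation or some a priori unrelated list of initial degrees. This is precisely the delicate point secured in Lemma \ref{lm1.6}, whose proof controls how the degrees of the $\mathrm{in}x_i$ relate to the degrees $\deg y_{n_i}$ of a homogeneous minimal basis of $J$, using the (FC)-property together with the algebraic independence of subsystems of parameters over $k$. Granting that lemma, the present corollary is a formal consequence, so I would expect the write-up to be short.
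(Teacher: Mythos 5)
Your proposal is correct and follows exactly the paper's own argument: fix a homogeneous minimal reduction with the given degree sequence, apply Lemma \ref{lm1.6} to obtain a weak-(FC)-sequence whose initial forms are a subsystem of parameters and whose $o$-values equal $(a_1,\ldots,a_h)$, then conclude by Corollary \ref{th3.6a}. Your identification of Lemma \ref{lm1.6} as the locus of all the real work is also precisely how the paper structures matters.
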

\begin{proof} Let  $J$ be a homogeneous
minimal reduction of $I$  generated by homogeneous elements of the
degree sequence $(a_1, \ldots,a_h).$ By Lemma \ref {lm1.6}, there
exists a weak-(FC)-sequence $ x_1,\ldots,x_{h}$ in $I$ of $(I,
\frak m)$ such that $\mathrm{in} x_1,\ldots,\mathrm{in} x_h$ is a
subsystem of parameters for $S$ and
 $(o(x_1),\ldots,o(x_h))= (a_1,\ldots,a_{h}).$
   Hence by Corollary  \ref{th3.6a}, we get the proof
of this corollary.
\end{proof}

\begin{remark} \label{rm36b} Corollary \ref{th3.6} is also Theorem
3.1 given by Viet in \cite{ViS}. Recall that the proof of
\cite{ViS} is based on the existence of homogeneous
weak-(FC)-sequences in $I$ as in \cite [Note 1]{ViS}. However,
this is not correct because in general there does not exist a
homogeneous weak-(FC)-sequence $ x_1,\ldots,x_h$ as the statements
in Lemma \ref{lm1.6} by the Remark of Lemma 1.4 \cite {Tr}. So by
another approach, the paper showed that Theorem 3.1 and hence the
other results on mixed multiplicities and multiplicities of Rees
rings in \cite{ViS} are true. So  Question 1 has a positive
answer.
\end{remark}

Theorem \ref{th3.6} covered  \cite [Theorem 3.3]{Tr} and \cite
[Theorem 3.6]{H} that \cite {H,Tr} studied this problem in the
case that $I$ is a homogeneous ideal generated by a subsystem of
parameters $x_1, \ldots,x_h$ which is an $I$-filer-regular
sequence with $\deg x_1 \le \cdots \le \deg x_h.$

\vskip.2cm
 Moreover, the
following result will show the decision of the degree sequence for
mixed multiplicities and multiplicities of Rees rings of
homogeneous  equimultiple ideals in graded rings.

\begin{corollary}\label{co37c} Let $I$ and $E$
be homogeneous equimultiple ideals of $S.$ Suppose that
homogeneous minimal reductions of $I$ and $E$  have the same
degree sequence. Then $e(R(I)) = e(R(E))$ and  $ e({\frak
m}^{[d-i]},I^{[i]}; S) = e({\frak m}^{[d-i]},E^{[i]}; S)$
  for all $ i \le h-1.$
\end{corollary}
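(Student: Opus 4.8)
The plan is to reduce everything to Corollary \ref{th3.6}, which already expresses both the mixed multiplicities $e({\frak m}^{[d-i]},I^{[i]}; S)$ and the multiplicity $e(R(I))$ of a homogeneous equimultiple ideal purely in terms of the degree sequence of a homogeneous minimal reduction, together with the fixed data $e(S)$ and $\dim S = d$. Since $I$ and $E$ are both homogeneous equimultiple ideals of the same ring $S$ and their homogeneous minimal reductions share a common degree sequence, all the quantities that enter those formulas agree, so the two sets of invariants should coincide by direct comparison.

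First I would record that the two ideals have the same height. A homogeneous minimal reduction $J$ of an equimultiple ideal is generated by $\mu(J) = \ell(I) = \mathrm{ht}I$ homogeneous elements, so its degree sequence has exactly $\mathrm{ht}I$ entries. Because the degree sequences of the homogeneous minimal reductions of $I$ and $E$ coincide, they in particular have the same length, which forces $\mathrm{ht}I = \mathrm{ht}E =: h$. I would then write $(a_1, \ldots, a_h)$ for this common degree sequence.

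Next I would apply Corollary \ref{th3.6} separately to $I$ and to $E$. For $I$ it gives $e({\frak m}^{[d-i]},I^{[i]}; S) = a_1 \cdots a_i e(S)$ for all $i \le h-1$ and $e(R(I)) = (1 + \sum_{i=1}^{h-1} a_1 \cdots a_i) e(S)$; for $E$ it gives the identical right-hand sides, since $E$ has the same height $h$, the same degree sequence $(a_1, \ldots, a_h)$, and lives in the same $S$ (so $e(S)$ and $d$ are unchanged). Comparing the two families of formulas then yields $e({\frak m}^{[d-i]},I^{[i]}; S) = e({\frak m}^{[d-i]},E^{[i]}; S)$ for all $i \le h-1$ and $e(R(I)) = e(R(E))$, which is exactly the assertion.

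There is essentially no hard step here: the statement is an immediate consequence of Corollary \ref{th3.6} once one observes that the formulas there depend on the ideal only through the degree sequence of a homogeneous minimal reduction (and the fixed quantities $e(S)$ and $d$). The only point that genuinely deserves a word of justification is that the hypothesis \emph{same degree sequence} already encodes \emph{same height}, which I would dispatch as above by matching the lengths of the two degree sequences.
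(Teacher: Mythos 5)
Your proposal is correct and follows essentially the same route as the paper: apply Corollary \ref{th3.6} to both $I$ and $E$ and compare the resulting formulas, which depend only on the common degree sequence, $e(S)$ and $d$. Your additional observation that equality of the degree sequences forces $\mathrm{ht}\,I = \mathrm{ht}\,E = h$ is a worthwhile clarification that the paper leaves implicit (its statement uses $h$ without defining it).
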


\begin{proof} Let $(x_1,
\ldots,x_h)$ and $(y_1, \ldots,y_h)$ be  homogeneous minimal
reductions of $I$ and $E$ with the  degree sequences $\deg x_1 \le
\cdots \le \deg x_h$ and $\deg y_1 \le \cdots \le \deg y_h,$
respectively. From the assumption we deduce that  $\deg x_i = \deg
y_i$ for all $i \le h.$ So  by Corollary \ref {th3.6} we get $
e({\frak m}^{[d-i]},I^{[i]}; S) = e({\frak m}^{[d-i]},E^{[i]}; S)$
  for all $ i \le h-1$ and $e(R(I)) = e(R(E)).$
\end{proof}
\begin{remark} \label{rm36f} Note that for any  equimultiple ideal $I$ of
 ht$I= h,$  for any $i \ge h,$ we have $ e({\frak m}^{[d-i]},I^{[i]}; S) = 0$
 by \cite{V1}
(see e.g. \cite{Vi4, VD}). Moreover, by Corollary \ref{th3.6}, it
follows that if $(a_1, \ldots,a_h)$ and $(b_1, \ldots,b_h)$ are
the degree sequences of homogeneous minimal reductions of
homogeneous equimultiple ideals $I$ and $E,$ respectively, then $
 e({\frak m}^{[d-i]},I^{[i]}; S) = e({\frak
m}^{[d-i]},E^{[i]}; S)$ for all $ i$    if and only if $a_i = b_i$
for all $ i \le h-1.$
\end{remark}

\end{document}